\documentclass[12pt,a4paper,reqno]{amsart}
\usepackage[T1]{fontenc}
\usepackage{lmodern}
\usepackage{amsmath,amssymb,amsthm,mathtools}
\usepackage[margin=28mm]{geometry}
\usepackage{tikz-cd,enumitem,microtype,needspace}
\usepackage[hidelinks]{hyperref}
\usepackage{calrsfs}
\usetikzlibrary{arrows.meta}
\tikzcdset{arrow style=tikz,diagrams={>=Stealth},row sep=large,column sep=large}
\setlist[enumerate]{leftmargin=*,itemsep=3pt}
\setlist[enumerate,1]{label=\textup{(\arabic*)},ref=\arabic*}
\setlist[enumerate,2]{label=\textup{(\alph*)},ref=\alph*}
\newcommand{\kk}{\Bbbk}
\renewcommand{\k}{\Bbbk}
\newcommand{\ZZ}{\mathbb Z}
\newcommand{\calC}{\mathcal C}

\newcommand{\calR}{\mathcal R}

\newcommand{\rad}{\operatorname{rad}}
\newcommand{\soc}{\operatorname{soc}}
\newcommand{\wt}{\operatorname{wt}}
\newcommand{\Ob}{\operatorname{Ob}}

\newcommand\ph{\phi}

\newcommand\Ph{\Phi}

\newcommand\id{1\kern-.25em{\text{{\rm l}}}}
\newcommand{\ang}[1]{\langle #1 \rangle}

\newtheorem{theorem}{Theorem}[section]
\newtheorem{proposition}[theorem]{Proposition}
\newtheorem{lemma}[theorem]{Lemma}
\newtheorem{corollary}[theorem]{Corollary}
\newtheorem*{maintheorem}{Theorem A}
\theoremstyle{definition}

\theoremstyle{remark}
\newtheorem{remark}[theorem]{Remark}
\numberwithin{equation}{section}

\allowdisplaybreaks[1]
\title[A nonstandard algebra of Euclidean type]{{A nonstandard selfinjective algebra of Euclidean type}}
\author[Hideto Asashiba]{{Hideto Asashiba}}
\address{Department of Mathematics, Faculty of Science,
Shizuoka University, 836 Ohya, Suruga-ku, Shizuoka, 422-8529, Japan}
\address{Institute for Advanced Study, KUIAS, Kyoto University,
Yoshida Ushinomiya-cho, Sakyo-ku, Kyoto 606-8501, Japan}
\address{Osaka Central Advanced Mathematical Institute,
3-3-138 Sugimoto, Sumiyoshi-ku, Osaka, 558-8585, Japan}
\email{asashiba.hideto@shizuoka.ac.jp}
\date{}%10 October 2026; version 1, revised 25 October 2026}
\subjclass[2020]{16G20, 16G60, 16W50}
\keywords{Selfinjective algebra, repetitive category, Galois covering, standard algebra, Kronecker algebra}
\hypersetup{pdfauthor={Hideto Asashiba},pdftitle={A nonstandard selfinjective algebra of Euclidean type},pdfsubject={A Euclidean orbit realization and an independent nonstandardness proof contradicting a published equivalence}}
\begin{document}
\begin{abstract}
\begingroup
Let $\kk$ be an algebraically closed field of characteristic different
from $2$. We realize the four-dimensional selfinjective algebra
$\Omega(1)$ as an admissible infinite cyclic orbit algebra of the
repetitive Kronecker category. Thus an algebra long stated to be
nonstandard is of Euclidean type in the orbit-algebra sense.
We also identify a gap in an earlier proof of its nonstandardness
and give an independent proof by determining all connected Galois
covering sources of $\Omega(1)$. Together, these results disprove the implication
from Euclidean type to standardness in the equivalence explicitly
stated by Bocian--Holm--Skowro\'nski (2004) and
Bocian--Skowro\'nski (2005).
They also contradict the standardness conclusion of the Proposition
in \S2.1 of Skowro\'nski (1989); its main theorem in \S1.5 assumes
standardness and is not contradicted by this example.
\par\endgroup
\end{abstract}
\maketitle

\section{Introduction}\label{sec:introduction}

Throughout this article, $\kk$ is an algebraically closed field with
$\operatorname{char}\kk\ne2$, and algebras are finite-dimensional
over $\kk$.
We compose morphisms and paths from right to left: for morphisms
$f\colon x\to y$ and $g\colon y\to z$, the product $gf$ means $g\circ f$.
Bocian--Holm--Skowro\'nski \cite[p.~45]{BHS04} and
Bocian--Skowro\'nski \cite[p.~158]{BS05} explicitly state the
following equivalence for selfinjective algebras, both with reference
to Skowro\'nski \cite{Sko}:
\begin{quote}
A selfinjective algebra is of Euclidean type if and only if it is
standard and representation-infinite domestic.
\end{quote}
The same equivalence was repeated in \cite[Remark~1.3]{AIS} as a
consequence of the main theorem of \cite{Sko}.
One purpose of the present paper is to give a counterexample to its
``only if'' implication. More precisely, we construct a selfinjective
algebra of Euclidean type which is not standard.

We use standardness in the sense of \cite[\S1.2]{Sko}: an algebra is
{\emph{standard}} if it admits a Galois covering by a simply connected locally
bounded category. The covering and simple connectedness conventions
are given in Section~\ref{sec:conventions}.
{\emph{Representation-infinite}} means that there are infinitely many
isomorphism classes of indecomposable finite-dimensional modules.
We use domestic in the sense of \cite[\S1.1]{Sko}.\footnote{An algebra
$R$ is {\emph{domestic}} if there are an integer $N\ge1$ and
$R$--$\kk[X]$-bimodules $M_1,\ldots,M_N$, finitely generated and
free as right $\kk[X]$-modules, with the following property:
for all integers $d\ge1$, all but finitely many isomorphism classes
of indecomposable left $R$-modules of dimension $d$ have a
representative $M_i\otimes_{\kk[X]}V$ for some
$i\in\{1,\ldots,N\}$ and some finite-dimensional indecomposable
$\kk[X]$-module $V$. Here $X$ is a polynomial indeterminate.}

Euclidean type is understood in the orbit-algebra sense used in
\cite{BHS04,BS05}: a selfinjective algebra is of {\emph{Euclidean type}} if
it is isomorphic to $\widehat B/G$, where $B$ is a tilted algebra
of Euclidean type and $G$ is an admissible group of $\kk$-linear
automorphisms of its repetitive category $\widehat B$.
Here {\emph{tilted of Euclidean type}} means tilted from a hereditary algebra
whose quiver has an extended Dynkin underlying graph.
An {\emph{automorphism}} is an invertible $\kk$-linear functor and may permute
objects. {\emph{Admissibility}} means that the action on objects is free and
has finitely many object orbits. {\emph{Freeness}} means that, for all $g\in G$
and all objects $x$, the equality $gx=x$ implies $g=1$.
These conditions concern only the action on objects;
they do not require each automorphism to send a chosen arrow
to a scalar multiple of a chosen arrow.

We consider
\begin{equation}\label{eq:A}
 A=\kk\langle a,b\rangle/(a^2,\ ab+ba,\ b^2-ab).
\end{equation}
This is the four-dimensional algebra denoted by $\Omega(1)$ in
Bocian--Holm--Skowro\'nski \cite[\S1]{BHS}.
Its nonstandardness has long been stated in the literature; see
\cite[Theorem~2.1]{BHS} and
\cite[preprint version, \S4.36 and Proposition~4.37]{AKMW}.
Our first result is that this algebra is nevertheless of Euclidean
type: we exhibit it as an admissible infinite cyclic orbit algebra
of the repetitive Kronecker category. We then give an independent
proof of nonstandardness, because the argument in
\cite[Proposition~5.3]{BS} has a gap.

Let $K$ be the Kronecker algebra, with quiver
\begin{equation}\label{eq:Kronecker-quiver}
\begin{tikzcd}[column sep=huge]
 x\arrow[r,bend left=17,"\alpha"]\arrow[r,bend right=17,"\beta"']&y.
\end{tikzcd}
\end{equation}
It is hereditary of Euclidean type and hence tilted of Euclidean
type. Our main results are as follows.

\begin{maintheorem}\label{thm:main}
For the algebra $A$ in \eqref{eq:A}, the following statements hold.
\begin{enumerate}
\item\label{item:A-euclidean} $A$ is a basic, connected, four-dimensional selfinjective
algebra of Euclidean type. More precisely, there is an automorphism
$\delta$ of $\widehat K$ such that $\langle\delta\rangle$ is
admissible and infinite cyclic, and
\[
 A\cong\widehat K/\langle\delta\rangle.
\]
\item\label{item:A-sources} Set $\calC_\infty:=\widehat K$ and
$\calC_n:=\calC_\infty/\langle\delta^n\rangle$ for each integer $n\ge1$.
Every connected Galois covering of $A$ has source isomorphic to
$\calC_\infty$ or to $\calC_n$ for some integer $n\ge1$.
None of these categories is simply connected.
Hence $A$ is not standard.
\end{enumerate}
\end{maintheorem}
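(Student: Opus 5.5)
The proof has two parts, matching the two items. Part~\eqref{item:A-euclidean} is an explicit construction. Part~\eqref{item:A-sources} classifies connected Galois coverings of $A$ through group gradings.

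\emph{Part~\eqref{item:A-euclidean}.} First I write down a presentation of $\widehat K$ by quiver and relations.
\begin{itemize}
\item The objects are $x_i,y_i$ for $i\in\ZZ$.
\item The arrows are $\alpha_i,\beta_i\colon x_i\to y_i$ (the copies of $K$), together with two arrows $\alpha'_i,\beta'_i\colon y_i\to x_{i+1}$ coming from $DK$.
\item The relations are the usual ones for a trivial-extension-type category. Both cycles at each vertex of length two agree up to sign, and the remaining paths of length two vanish.
\end{itemize}
I then define $\delta$ on objects by $\delta(x_i)=y_i$ and $\delta(y_i)=x_{i+1}$, so that $\delta$ is a ``square root'' of the Nakayama automorphism $\nu_{\widehat K}$. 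On arrows I set
\[
\delta(\alpha_i),\ \delta(\beta_i)\in\operatorname{span}(\alpha'_i,\beta'_i),\qquad
\delta(\alpha'_i),\ \delta(\beta'_i)\in\operatorname{span}(\alpha_{i+1},\beta_{i+1}),
\]
via $2\times2$ matrices to be chosen. I choose these matrices so that $\delta^2$ equals $\nu_{\widehat K}$ twisted by a unipotent, non-diagonalizable matrix; this non-diagonalizable twist is exactly what should produce the relation $b^2=ab$.

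Next I check two things:
\begin{itemize}
\item that $\delta$ preserves the relation ideal, which is a finite computation with the matrices;
\item that $\langle\delta\rangle$ acts freely with a single object orbit, which is immediate from the action on objects.
\end{itemize}
Hence $\widehat K/\langle\delta\rangle$ has one object. Its endomorphism algebra has basis $1,a,b,ab$, where $a,b$ are the orbits of $\alpha_0,\beta_0$. Finally I compute $a^2$, $ab+ba$ and $b^2-ab$ in the orbit category, using the relations of $\widehat K$ and the matrices defining $\delta$, and verify that all three vanish. The sign conventions and $\operatorname{char}\kk\ne2$ enter in normalizing the anticommutation $ab+ba=0$. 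This gives $A\cong\widehat K/\langle\delta\rangle$, and $A$ is basic, connected, local and selfinjective of dimension~$4$.

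\emph{Part~\eqref{item:A-sources}.} I use the correspondence between connected Galois coverings $\calC\to A$ with group $G$ and $G$-gradings of $A$ whose smash product is connected, via Cibils--Marcos/Asashiba. Let $A=\bigoplus_g A_g$ be such a grading.
\begin{itemize}
\item Since $A$ is local, $1\in A_e$. The radical $\rad A$ and its square $\rad^2A$ are graded subspaces, so I can pick homogeneous $x,y$ spanning $\rad A$ modulo $\rad^2A$, of degrees $g$ and $h$.
\item The invariant I exploit is $(\lambda a+\mu b)^2=\mu^2ab$. Thus the square-zero elements of $\rad A$ modulo $\rad^2A$ form a single line.
\item Suppose $g\ne h$. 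The socle $\soc A=\rad^2A$ is one-dimensional and homogeneous, of some degree $s$. Then $x^2=0$ unless $2g=s$, and $y^2=0$ unless $2h=s$, so exactly one of $x^2,y^2$ is nonzero, say $y^2\ne0$, so $s=2h$. Then $2g\ne 2h$, since otherwise $x^2$ and $y^2$ would both be homogeneous of degree $s$ and hence both nonzero, contradicting the single-line invariant.
\item Consequently $g+h\ne s$, which forces $xy=yx=0$. Since also $x^2=0$, the element $x$ annihilates $\rad A$ on both sides, so $x\in\soc A$. This is a contradiction.
\end{itemize}
Hence $g=h$. Connectedness of the smash product forces $G=\langle g\rangle$, so $G\cong\ZZ$ or $G\cong\ZZ/n$. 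In either case, the grading has $a,b$ homogeneous of degree $g$ after a graded change of generators, which is unique up to equivalence. On the other hand, the grading of $A$ induced by the covering $\widehat K\to A$ from part~\eqref{item:A-euclidean} is of exactly this form. Therefore the smash products are $\calC_\infty$ or $\calC_n$ respectively.

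It remains to show that no $\calC_n$ and not $\calC_\infty$ is simply connected. For $\calC_n$ this is clear, since $\calC_\infty\to\calC_n$ is a proper connected Galois covering with group $\ZZ$. For $\calC_\infty=\widehat K$, I exhibit a further nontrivial connected grading:
\begin{itemize}
\item give the arrows $\alpha_i,\alpha'_i$ degree $0$ and $\beta_i,\beta'_i$ degree $1$ in $\ZZ$ (or $\ZZ/2$), adjusted so that every relation of $\widehat K$ is homogeneous;
\item check that the smash product is connected.
\end{itemize}
This yields a proper connected Galois covering of $\widehat K$, so $\widehat K$ is not simply connected. Since every connected Galois covering source of $A$ is one of these non-simply-connected categories, $A$ is not standard.

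\emph{Main obstacle.} I expect two points to be hardest. The first is choosing the matrices defining $\delta$ so that the relations come out exactly as in~\eqref{eq:A}. The second is the degree bookkeeping in the last step. The mixed relations $\alpha'\alpha=\pm\beta'\beta$ must stay homogeneous, while the grading must still give a connected smash product, and whether this works depends on the precise relations and on the exact definition of simple connectedness in Section~\ref{sec:conventions}. If a $\ZZ$-grading fails there, I would try a $\ZZ/2$-grading instead.
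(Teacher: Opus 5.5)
\textbf{Main gap: Part~(2), excluding $g\ne h$.} Your invariant, a unique square-zero line in $\rad A/\rad^2A$, only shows that $x^2$ and $y^2$ are not both zero. It does not give ``exactly one of $x^2,y^2$ is nonzero''. Write $q$ for the degree of $\soc A$. If $g\ne h$ but $g^2=h^2=q$, both squares may a priori be nonzero. Such $g,h$ exist whenever $G$ has suitable $2$-torsion, e.g.\ $\ZZ/4$ or $\ZZ/2\times\ZZ/2$.

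Your justification is a non sequitur. You say $x^2,y^2$ would then be homogeneous of the socle degree ``and hence both nonzero, contradicting the single-line invariant''. But lying in degree $q$ does not force a square to be nonzero. And two nonzero squares are compatible with a unique isotropic line: take $x=a+b$, $y=b$.

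This case needs its own argument, and it is exactly where $\operatorname{char}\kk\ne2$ must be used (not in Part~(1), whose computation is characteristic-free). Cancellation gives $gh\ne q$ and $hg\ne q$, so $xy=yx=0$. The paper then notes that $B$ would be diagonal in the basis $(\bar x,\bar y)$, hence symmetric, while its matrix in $(\bar a,\bar b)$ is not symmetric. Alternatively, in your own terms, $(\lambda x+\mu y)^2=\lambda^2x^2+\mu^2y^2$ would vanish on two distinct lines.

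Some such step is indispensable, because in characteristic $2$ your argument runs verbatim and the conclusion is false. There $u=b$ and $v=a+b$ satisfy $u^2=v^2=s$ and $uv=vu=0$. So $\deg u=(1,0)$, $\deg v=(0,1)$, $\deg s=0$ is a $\ZZ/2\times\ZZ/2$-grading whose smash product is connected and locally bounded. This gives a connected Galois covering with noncyclic group.

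Once this case is added, your remaining case (one square zero, so $xy=yx=0$ and $x\in\soc A$) is the paper's argument. Two smaller points in the same step:
\begin{itemize}
\item $G$ is not known to be abelian, so write degrees multiplicatively; the cancellations still work in any group.
\item Homogeneity of $\rad A$ needs proof. A homogeneous unit of degree $g\ne1$ would lift to an isomorphism $gx\cong x$ in the locally bounded source, contradicting freeness.
\end{itemize}

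\textbf{Part~(1).} You follow the paper's route, but the matrices you leave ``to be chosen'' are the whole content, so you must write them down. Relabel $\widehat K$ as a chain with vertices $i\in\ZZ$ ($x_m=2m$, $y_m=2m+1$) and arrows $c_i,d_i\colon i\to i+1$. Here $c_{2m}=\alpha_m$, $d_{2m}=\beta_m$, and $c_{2m+1},d_{2m+1}$ are the returning arrows dual to $\beta$ and $\alpha$, respectively. Then $c_{i+1}c_i=d_{i+1}d_i=0$ and $c_{i+1}d_i=d_{i+1}c_i\ne0$.

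The paper's key step is the $i$-dependent basis change $a_i=(-1)^ic_i$, $b_i=d_i+i(-1)^ic_i$. In the new basis the relations read $a_{i+1}a_i=0$, $a_{i+1}b_i+b_{i+1}a_i=0$, $b_{i+1}b_i=a_{i+1}b_i$ uniformly in $i$, so the plain index shift is $\delta$. In the original arrows this means $\delta(c_i)=-c_{i+1}$ and $\delta(d_i)=d_{i+1}+(-1)^{i+1}c_{i+1}$. Consequently $\delta^2(c_i)=c_{i+2}$ and $\delta^2(d_i)=d_{i+2}+2(-1)^ic_{i+2}$, which confirms your unipotent heuristic.

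\textbf{Non-simple-connectedness.} Your covering-theoretic argument is acceptable, given the standard fact that a proper connected Galois covering yields a presentation with nontrivial fundamental group. The paper argues directly instead. For $\calC_n$ it uses oriented cycles; for $\calC_\infty$ it uses the homotopy-invariant weight $\wt(c_i)=0$, $\wt(d_i)=1$. Your grading of $\widehat K$ must be adjusted to exactly this weight. The returning arrow dual to $\alpha$ gets degree $1$ and the one dual to $\beta$ gets degree $0$; otherwise the commutativity relation is not $\ZZ$-homogeneous.
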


Parts (\ref{item:A-euclidean}) and (\ref{item:A-sources}) disprove the implication from Euclidean type to
standardness, and hence the ``only if'' direction of the equivalence
stated in \cite{BHS04,BS05,AIS}.
The reverse implication is not contradicted by this example.
In \cite{Sko}, the directly contradicted assertion is the
standardness conclusion of the Proposition in \S2.1, p.~180.
To identify the logical grounds of this equivalence, we combine
the main theorem in \S1.5 with that Proposition:
the main theorem gives the implication from standard
representation-infinite domestic type to Euclidean type,
whereas the Proposition asserts that the relevant Euclidean
orbit algebras are standard and representation-infinite domestic.
The problem exposed by our example is the standardness conclusion
of the Proposition, not the logical combination of these statements.
The Kronecker algebra also satisfies the additional complete-slice
condition in the definition of a Euclidean algebra in
\cite[\S1.3]{Sko}; we verify this in
Section~\ref{sec:repetitive}.
The main theorem assumes standardness.
Our nonstandard example is not a counterexample to
that theorem as stated. Nor does the failure of the displayed
equivalence by itself disprove an entire derived equivalence
classification.

We prove (\ref{item:A-euclidean}) first using the repetitive-quiver and orbit-category
calculations of \cite[\S1.3]{Asa99} and \cite[Theorem~8.1]{Asa}.
We present $\widehat K$ by its repetitive quiver and set
$\calC_\infty:=\widehat K$.
A change of arrow bases gives an index-shift automorphism $\delta$.
The orbit-category calculation then gives one vertex, two loops,
and exactly the defining relations of $A$.
This construction uses no nonstandardness assertion and no
classification of selfinjective algebras.

The proof of (\ref{item:A-sources}) requires more than citing the earlier
nonstandardness assertion. The proof of
\cite[Proposition~5.3, p.~4255]{BS} asserts that every bound-quiver
presentation of $\Omega(T)$ has trivial fundamental group.
For $\Omega(1)$, however, the presentation printed in
\cite[Proposition~5.1, p.~4253]{BS} has fundamental group $\ZZ$.
We calculate it in Section~\ref{sec:earlier}.
Thus this argument does not prove the nonstandardness of $\Omega(1)$.
We start with an arbitrary connected Galois covering of $A$, use
the induced group grading to determine its source, and prove that
none of the possible sources is simply connected. This gives an
independent proof of nonstandardness.

In (\ref{item:A-sources}), sources are classified up to isomorphism.
More precisely, for every connected Galois covering $F\colon\calC\to A$
there are an index $\ell\in\{1,2,3,\ldots,\infty\}$, an isomorphism
$\Ph\colon\calC_\ell\to\calC$ and an automorphism $\ph$ of $A$
such that the following diagram commutes:
\[
\begin{tikzcd}
 \calC_\ell\arrow[r,"\Ph","\cong"']\arrow[d,"Q_\ell"']&
 \calC\arrow[d,"F"]\\
 A\arrow[r,"\ph","\cong"']&A.
\end{tikzcd}
\]
The functors $Q_\ell$ are defined below; our Galois coverings are
strictly invariant in the sense of Section~\ref{sec:conventions}.
In general one cannot take $\ph=\id_A$; see
Proposition~\ref{prop:classification} and Remark~\ref{rem:target}.

Section~\ref{sec:conventions} gives the covering conventions and
the multiplication of $A$. Section~\ref{sec:repetitive} proves its
Euclidean orbit realization. We then examine the earlier
nonstandardness argument in Section~\ref{sec:earlier}, determine
all connected Galois sources in Section~\ref{sec:sources}, and
complete the proof of nonstandardness in
Section~\ref{sec:nonstandardness}.
Section~\ref{sec:conclusion} discusses the precise consequences
for the published equivalence and the standardness assertion.

\section{Covering conventions and the algebra}\label{sec:conventions}

We fix the covering and smash-product conventions and compute
the multiplication of $A$.

We write $\calC(x,y)$ for the space of morphisms
from $x$ to $y$.
The right-to-left convention also applies to multiplication in $A$.

All categories and functors in this article are $\kk$-linear.
A {\emph{line}} in a vector space means a one-dimensional linear subspace.
A {\emph{locally bounded}} category has pairwise nonisomorphic objects,
finite-dimensional local endomorphism algebras, and finite total
incoming and outgoing dimensions at each object. The latter condition
means that
\[
 \sum_{y\in\Ob\calC}\dim_{\kk}\calC(x,y)<\infty,
 \qquad
 \sum_{y\in\Ob\calC}\dim_{\kk}\calC(y,x)<\infty
\]
for all $x\in\Ob\calC$. A category is {\emph{connected}} if the unoriented
graph joining distinct objects with a nonzero morphism in either
direction is connected.
We identify a basic algebra $R$ with the category associated with
a fixed complete set $e_1,\ldots,e_t$ of primitive orthogonal
idempotents: its objects are the $e_i$, and its morphism space
from $e_i$ to $e_j$ is $e_jRe_i$ for all $i,j\in\{1,\ldots,t\}$.
In particular, $A$ has one object, denoted by $*$.
For a locally bounded category $\calR$ with finitely many objects,
its \emph{associated algebra} is
$\bigoplus_{x,y\in\Ob\calR}\calR(x,y)$, with multiplication given
by composition when the factors are composable and zero otherwise.
When $\calC/G$ has finitely many objects, its \emph{orbit algebra}
means the algebra associated with this orbit category.
For a one-object category this is just its endomorphism algebra.

A functor $F:\calC\to\calR$ is a {\emph{covering}} if it is surjective on
objects and, for all $x\in\Ob\calC$ and $r\in\Ob\calR$, both maps
\begin{align*}
 \bigoplus_{\substack{y\in\Ob\calC\\Fy=r}}\calC(x,y)
 &\longrightarrow\calR(Fx,r),&
 (f_y)_y&\longmapsto\sum_y F(f_y),\\
 \bigoplus_{\substack{y\in\Ob\calC\\Fy=r}}\calC(y,x)
 &\longrightarrow\calR(r,Fx),&
 (h_y)_y&\longmapsto\sum_y F(h_y)
\end{align*}
are isomorphisms. We call these the {\emph{outgoing and incoming covering
maps}}, respectively. The {\emph{object fiber}} over $r$ is
$\{y\in\Ob\calC \mid Fy=r\}$. A {\emph{connected covering}} has connected source.
We call a covering {\emph{proper}} if it is not an isomorphism.
A {\emph{deck transformation}} of $F$ is an automorphism $T$ of $\calC$ such
that $FT=F$; these form its {\emph{deck group}} under composition.
A {\emph{lift}} of an automorphism $g$ of $\calR$ through $F$ is an
automorphism $T$ of $\calC$ such that $FT=gF$.
\par\begingroup
A {\emph{strict action}} of a group $G$ on a category $\calC$ is a group
homomorphism $\rho\colon G\to\operatorname{Aut}(\calC)$, where
$\operatorname{Aut}(\calC)$ denotes the group of invertible functors
from $\calC$ to itself. We write $gx=\rho(g)(x)$ and
$g(f)=\rho(g)(f)$. Thus $\rho(1)=\id_{\calC}$ and
$\rho(gh)=\rho(g)\rho(h)$ for all $g,h\in G$, as equalities of
functors.

For a category $\calC$ equipped with such an action, a functor
$F\colon\calC\to\calR$ is called {\emph{strictly $G$-invariant}} if
$F\rho(g)=F$ for all $g\in G$. Equivalently, each $\rho(g)$ is a
deck transformation of $F$. Throughout this article, invariance
means strict invariance in this sense.
\par\endgroup

We use the {\emph{classical orbit category}} of Gabriel.

Let a group $G$ act on a locally bounded category $\calC$ on the left,
and assume that the action is free on objects: for all $g\in G$ and
$x\in\Ob\calC$, the equality $gx=x$ implies $g=1$.

The objects of $\calC/G$ are the $G$-orbits in $\Ob\calC$.
For object orbits $X,Y$, a morphism $f:X\to Y$ is a row- and
column-finite matrix
\[
 f=(f_{v,u})_{v\in Y,\,u\in X},\qquad f_{v,u}\in\calC(u,v),
 \qquad f_{gv,gu}=g(f_{v,u}).
\]
{The last equality holds for all $g\in G$, $u\in X$ and $v\in Y$.
{\emph{Row- and column-finite}} means that each row and each column has only
finitely many nonzero entries.}
Composition is matrix multiplication.
This is the classical construction recalled in
\cite[Remark~2.2(1)]{Asa}.

Choose representatives $x\in X$ and $y\in Y$.
Taking the row indexed by $y$ gives the {\emph{source-first identification}}
\begin{equation}\label{eq:orbit-convention}
 (\calC/G)(Gx,Gy)\cong\coprod_{g\in G}\calC(gx,y).
\end{equation}
Here $\coprod$ denotes an external direct sum of vector spaces.
Freeness makes every object in $Gx$ uniquely of the form $gx$.
Writing $f_g=f_{y,gx}$ for all $g\in G$, equivariance recovers
the other rows by $f_{hy,hgx}=h(f_g)$ for all $g,h\in G$.
For $f=(f_a)_{a\in G}\colon Gx\to Gy$ and
$h=(h_b)_{b\in G}\colon Gy\to Gz$, we have
$f_a\colon ax\to y$ and $h_b\colon by\to z$ for all $a,b\in G$.
For all $c\in G$, composition in these coordinates is
\begin{equation}\label{eq:orbit-composition}
 (hf)_c=\sum_{ba=c}h_b\,b(f_a).
\end{equation}
The summand in this formula is the composite
\begin{equation}\label{eq:source-composition-picture}
\begin{tikzcd}[column sep=huge]
 ba x\arrow[r,"b(f_a)"]&by\arrow[r,"h_b"]&z.
\end{tikzcd}
\end{equation}
These are the source-first coordinates in
\cite[Definition~2.10(1) and Proposition~2.11]{Asa}.
Their agreement with the classical orbit category in the present
free-action setting is also recorded in \cite[Remark~8.2(2)]{Asa}.

The {\emph{canonical projection}} $\calC\to\calC/G$ sends $x$ to $Gx$.
It sends $f:x\to y$ to the equivariant matrix whose entry at
$(gy,gx)$ is $g(f)$ for each $g\in G$ and whose other entries are zero.
Under \eqref{eq:orbit-convention}, with the representatives $x$ and $y$ fixed,
the image of $f$ has component $f$ at $g=1$ and zero at every $g\ne 1$.
{The projection is strictly $G$-invariant.}

{Let $F\colon\calC\to\calR$ be a strictly
$G$-invariant functor. For objects $x$ and $y$ of $\calC$, define
the {\emph{source-first covering map}} by}
\begin{equation}\label{eq:source-covering-map}
 F_{x,y}:\coprod_{g\in G}\calC(gx,y)\longrightarrow\calR(Fx,Fy),
 \qquad (f_g)_g\longmapsto\sum_g F(f_g).
\end{equation}
The corresponding {\emph{target-first map}} is
\[
 \coprod_{g\in G}\calC(x,gy)\longrightarrow\calR(Fx,Fy),
 \qquad (h_g)_{g\in G}\longmapsto\sum_{g\in G}F(h_g).
\]
The $g$-summand of this target-first domain is sent isomorphically
to the $g^{-1}$-summand of the source-first domain by
$h_g\mapsto g^{-1}(h_g)\in\calC(g^{-1}x,y)$.
Strict invariance gives $F(g^{-1}(h_g))=F(h_g)$, so the two
covering maps agree under this direct-sum isomorphism.
Consequently the target-first map is an isomorphism if and only if
$F_{x,y}$ is an isomorphism.
{For a more general formulation of these covering
maps, see \cite[Notation~1.5 and Proposition~1.6]{Asa}.}

{In this article, a {\emph{Galois covering}}
$F\colon\calC\to\calR$ is a classical orbit projection
$\calC\to\calC/G$ for a group $G$ acting freely on objects,
followed by an isomorphism $\calC/G\to\calR$.
With respect to this $G$-action, $F$ is strictly $G$-invariant,
its source-first covering maps \eqref{eq:source-covering-map}
are isomorphisms, and its object fibers are the $G$-orbits.}
This is the Galois covering used in the definition of standardness in
\cite[\S1.2]{Sko}.
All categories under consideration are locally bounded; in particular,
distinct objects are nonisomorphic.

For repetitive categories, a group action is admissible if it is free
on objects and has finitely many object orbits
\cite[\S1.4, p.~179]{Sko}. We call an automorphism {\emph{admissible}} if the
cyclic group it generates acts admissibly.

We recall the homotopy convention for a bound-quiver presentation
$\kk\mathsf Q/I$. A {\emph{walk}} in $\mathsf Q$ is a composable sequence
of arrows and formal inverse arrows; an {\emph{inverse arrow}} is traversed
in the reverse direction. A {\emph{relation}} is an element of $I$ that is
a linear combination of paths with the same source and target.
Its {\emph{support}} is the set of paths with nonzero coefficients.
A nonzero relation is {\emph{minimal}} if no nonempty proper subsum belongs
to $I$.
{\emph{Homotopy}} is the equivalence relation on walks generated by immediate
arrow--inverse cancellations and by identifying any two paths in the
support of a minimal relation, and preserved by concatenation on
either side. A {\emph{monomial zero relation}}, whose support has only one
path, does not identify two different paths.
The {\emph{bound-quiver fundamental group}} at a vertex consists of the
homotopy classes of closed walks at that vertex, with multiplication
induced by concatenation. A closed walk is {\emph{null-homotopic}} if it is
homotopic to the constant walk. 

{A quiver is called \emph{acyclic} if it has no oriented cycle.
Following \cite[\S1.2]{Sko}, a connected locally bounded category is called
\emph{simply connected} if its quiver is acyclic and every bound-quiver
presentation has trivial fundamental group.}

In particular, one presentation with nontrivial fundamental group is
enough to disprove simple connectedness. When we use the word
intrinsically, it emphasizes this requirement on every presentation;
it does not introduce a different notion of simple connectedness.

A \emph{$G$-grading} of a category $\mathcal B$ is a decomposition
$\mathcal B(x,y)=\bigoplus_{g\in G}\mathcal B_g(x,y)$ such that
$\id_x\in\mathcal B_1(x,x)$ and
$\mathcal B_g(y,z)\mathcal B_h(x,y)\subseteq\mathcal B_{gh}(x,z)$.
For a one-object algebra this is written $A=\bigoplus_{g\in G}A_g$.
An element of the degree-$g$ component is \emph{homogeneous of degree $g$};
a subspace is \emph{homogeneous} if it is the direct sum of its
intersections with the components.
The \emph{support} of a graded algebra is $\{g\in G\mid A_g\ne0\}$.
For $\ZZ$ we use additive degrees.

We use the \emph{left smash product} of \cite[Definition~5.2]{Asa}
and \cite[Definition~5.6.1]{AsaBook}. Its objects are
$x^{(a)}:=(x,a)$ for all $x\in\Ob\mathcal B$ and $a\in G$.
For all $x,y\in\Ob\mathcal B$ and $a,b\in G$, we identify
\begin{equation}\label{eq:smash-convention}
 (\mathcal B\#G)(x^{(a)},y^{(b)})=\mathcal B_{b^{-1}a}(x,y).
\end{equation}
Under this identification, a morphism is written simply as
$f:x^{(a)}\to y^{(b)}$, with $f\in\mathcal B_{b^{-1}a}(x,y)$,
and composition is that of $\mathcal B$.
For all $c\in G$, the left action sends $x^{(a)}$ to $x^{(ca)}$
and $f:x^{(a)}\to y^{(b)}$ to the morphism
$f:x^{(ca)}\to y^{(cb)}$ represented by the same element of $\mathcal B$.
The forgetful functor sends $x^{(a)}$ to $x$ and each morphism $f$
to the same element $f$ of $\mathcal B$.
For the locally bounded smash products used below, it is a Galois
covering \cite[Propositions~5.6 and 5.8]{Asa}.
For a homogeneous arrow $\alpha:x\to y$ of degree $g$, the
weighted-quiver calculation \cite[Theorem~6.2.18]{AsaBook} replaces
it by arrows $\alpha^{(a)}:x^{(ag)}\to y^{(a)}$ for all $a\in G$
and lifts the relations at each group index. Under
\eqref{eq:smash-convention}, the image of $\alpha^{(a)}$ is denoted
simply by $\alpha$.
This is the left-action version of \cite[Theorem~1.18]{AsaSmash}.
The notation $A\#G$ in that paper denotes the right-action version,
whose morphism degree is $ba^{-1}$; the book denotes that version
by $A\#^{\mathrm{op}}G$ \cite[\S6.3]{AsaBook}.
All smash products in this article use \eqref{eq:smash-convention}.

\Needspace{10\baselineskip}
\begin{lemma}\label{lem:algebra}
Set $s=ab$ and $J=\rad A$.
Then $1,a,b,s$ form a basis of $A$, and the multiplication is given
by the following table:
\begin{equation}\label{eq:table}
\begin{array}{c|cccc}
&1 &a & b & s\\
\hline
1 & 1 & a & b& s\\
a & a & 0 & s & 0\\
b & b & -s & s & 0\\
s & s & 0 & 0 & 0
\end{array}
\end{equation}
Moreover,
\[
 J=\kk a\oplus\kk b\oplus\kk s,\qquad
 J^2=\soc A=\kk s,\qquad J^3=0,
\]
and $A$ is basic, connected and selfinjective.
\end{lemma}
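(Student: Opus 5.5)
The plan is to work directly in the free algebra modulo the ideal $I=(a^2,\ ab+ba,\ b^2-ab)$, first showing that $1,a,b,s$ span $A$ and then that they are linearly independent.

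\emph{Spanning.} Every word of length $2$ in $a,b$ reduces modulo $I$ to a multiple of $s$: $a^2=0$, $ab=s$, $ba=-s$, $b^2=s$. For words of length $3$ we check $as=a\cdot ab=a^2b=0$ and $sa=aba=-a^2b=0$. Next, $bs=bab=-ab^2=-a\,ab=0$, using $b^2=ab$. Finally $sb=ab^2=a\,ab=0$. Hence $J_0^3\subseteq I$, where $J_0$ is the ideal generated by $a,b$, and $A$ is spanned by $1,a,b,s$. The same relations give the whole table \eqref{eq:table}.

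\emph{Independence.} This is the only non-routine point, and I will settle it by a concrete model. On $V=\kk^4$ with basis $e_1,e_a,e_b,e_s$, define the left multiplication operators $L_a$ and $L_b$ dictated by the table: $L_a$ sends $e_1\mapsto e_a$, $e_b\mapsto e_s$, and the rest to $0$; $L_b$ sends $e_1\mapsto e_b$, $e_a\mapsto -e_s$, $e_b\mapsto e_s$, and $e_s\mapsto 0$. We then verify $L_a^2=0$, $L_aL_b+L_bL_a=0$ and $L_b^2=L_aL_b$ on each basis vector, which is a finite check. This gives a representation of $A$ in which $1,a,b,s$ act on $e_1$ to give $e_1,e_a,e_b,e_s$ (indeed $L_aL_be_1=L_ae_b=e_s$). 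So these elements are linearly independent, $\dim A=4$, and the table is the multiplication table of $A$.

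\emph{Structure.} The span $N=\kk a\oplus\kk b\oplus\kk s$ is an ideal by the table. We have $N^2=\kk s$ and $N^3=0$, so $N$ is nilpotent, and $A/N\cong\kk$. Hence $J=N$, $A$ is local (so basic and connected, with one object), $J^2=\kk s$ and $J^3=0$. For the socle, take $x=\lambda a+\mu b+\nu s$ with $xa=xb=0$. Since $xa=-\mu s$ and $xb=(\lambda+\mu)s$, this forces $\lambda=\mu=0$. So the right socle is $\kk s$, and symmetrically $ax=\mu s$, $bx=(-\lambda+\mu)s$ gives left socle $\kk s$. Thus $\soc A=\kk s$.

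\emph{Selfinjectivity.} Let $\varphi\colon A\to\kk$ be the coefficient of $s$. The form $(x,y)\mapsto\varphi(xy)$ is nondegenerate: on $J$ its Gram matrix in $a,b$ is $\begin{pmatrix}0&1\\-1&1\end{pmatrix}$, which has determinant $1$, and $1$ pairs with $s$. So $A$ is Frobenius, hence selfinjective. The only step I would treat carefully is the associativity/independence check through the operator model; everything else is direct from the table.
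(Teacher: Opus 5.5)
Your proof is correct and follows the paper's route. You get spanning from the relations, independence from a four-dimensional model, $J$ as a nilpotent ideal with quotient $\kk$, and the socle from annihilators.

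The differences are only local. Your operator model $L_a,L_b$ on $\kk^4$ makes associativity automatic, whereas the paper defines the table algebra and checks associativity from the vanishing of triple products in $\kk a+\kk b+\kk s$. Your nondegenerate form $\varphi(xy)$ proves selfinjectivity directly via the Frobenius property, whereas the paper infers it more briefly from the one-dimensional left and right socles of the local algebra $A$.
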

\begin{proof}
By the defining relations of $A$, we have $a^2=0,\, ab=s,\, ba=-s,\, b^2=s$.
It is easy to see that the remaining equalities follow from them.
Thus $1,a,b,s$ span $A$.
To show their linear independence, 
take a vector space $B$ with these four basis
symbols and define multiplication by the table \eqref{eq:table}, %
which shows that $1$ is the identity and that
every product of three elements of $\kk a+\kk b+\kk s$ is
zero, so this multiplication is associative.
Thus $B$ turns out to be an algebra.
Define an algebra homomorphism $f \colon \k\ang{a,b} \to B$ by $f(x):= x$
for each $x = a, b$.
Then it induces a surjective linear map $A \to B$ by the relations in \eqref{eq:table}, and hence
$\dim_\k A \ge \dim_\k B = 4$, which shows the required linear independence.
Thus $\{1,a,b,s\}$ is a basis of $A$.

The subspace $I:= \kk a+\kk b+\kk s$ is a nilpotent ideal of $A$ and the quotient $A/I$ is $\kk$.
Therefore $I = J$. The multiplication table gives $J^2=\kk s$ and
$J^3=0$. It also shows that the left and right annihilators of $J$ are
both $\kk s$, which proves the assertion about the socle.
In particular, $A$ is weakly symmetric, and hence a selfinjective algebra.

Finally, $A/J\cong\kk$ shows that
$A$ is basic and connected.
\end{proof}

Set $V=J/J^2$, and write $\bar u=u+J^2$ for all $u\in J$.
Since $J^3=0$, multiplication gives a well-defined bilinear form
\begin{equation}\label{eq:bilinear}
 B:V\times V\longrightarrow\kk,\qquad uv=B(\bar u,\bar v)s.
\end{equation}
In the basis $(\bar a,\bar b)$, the matrix of $B$ and its square
values, for all $x,y\in\kk$, are
\begin{equation}\label{eq:matrix}
 [B]=\begin{pmatrix}0&1\\-1&1\end{pmatrix},\qquad
 B(x\bar a+y\bar b,x\bar a+y\bar b)=y^2.
\end{equation}

A line $L\subseteq V$ is {\emph{isotropic}} for $B$ if $B(v,v)=0$ for all
$v\in L$. In particular, $B$ is nondegenerate and nonsymmetric, and
$\kk\bar a$ is its unique isotropic line. We will use these three
facts in Section~\ref{sec:sources}.

\section{The Euclidean orbit realization}\label{sec:repetitive}

We prove Theorem~A(\ref{item:A-euclidean}) by explicitly realizing $A$
as the orbit category $\widehat K/\langle\delta\rangle$.
We use the repetitive-quiver calculation in \cite[\S1.3]{Asa99}
and the orbit-category calculation in \cite[Theorem~8.1]{Asa}
(see also \cite[Theorem~6.1.2]{AsaBook}).

Let $K$ be the Kronecker algebra with arrows
$\alpha,\beta:x\to y$. It is representation-infinite and hereditary
of Euclidean type, hence tilted of Euclidean type.
Its indecomposable injective modules form a complete slice in the
preinjective component, as required in the definition of a Euclidean
algebra in \cite[\S1.3, p.~179]{Sko}.

The repetitive-quiver construction of \cite[\S1.3]{Asa99} gives
vertices $x_m,y_m$ and arrows
\begin{equation}\label{eq:repetitive-picture}
\begin{tikzcd}[column sep=large]
 x_m\arrow[r,shift left=3pt,"\alpha_m"]
 \arrow[r,shift right=3pt,"\beta_m"']&
 y_m\arrow[r,shift left=3pt,"r_m"]
 \arrow[r,shift right=3pt,"t_m"']&
 x_{m+1}\arrow[r,shift left=3pt,"\alpha_{m+1}"]
 \arrow[r,shift right=3pt,"\beta_{m+1}"']&y_{m+1}
\end{tikzcd}
\end{equation}
for all $m\in\ZZ$. The full subcategory on $x_m,y_m$ is the
\emph{$m$th layer}, identified with $K$ by
$(x_m,y_m,\alpha_m,\beta_m)\mapsto(x,y,\alpha,\beta)$.
The returning arrows $r_m,t_m$ correspond to the functionals dual
to $\alpha,\beta$ in $DK$, respectively, where
$D=\operatorname{Hom}_{\kk}(-,\kk)$.
Evaluation on the basis $e_x,e_y,\alpha,\beta$ of $K$ gives
\begin{align}
 r_m\alpha_m&=t_m\beta_m,&
 \alpha_{m+1}r_m&=\beta_{m+1}t_m,\label{eq:dual-pairing}\\
 t_m\alpha_m&=r_m\beta_m=0,&
 \beta_{m+1}r_m&=\alpha_{m+1}t_m=0.\nonumber
\end{align}
The common values in the first line are dual to $e_x$ and $e_y$,
respectively, and are nonzero. These are the defining quadratic
relations of $\widehat K$: all paths of length three already vanish
by these relations, and the surviving paths give the basis in
each layer and the dual basis between consecutive layers.

Set $\calC_\infty:=\widehat K$. For all $m\in\ZZ$, relabel its
vertices and arrows by
\begin{equation}\label{eq:repetitive-arrows}
 \begin{gathered}
 x_m=2m,\qquad y_m=2m+1,\\
 c_{2m}=\alpha_m,\quad d_{2m}=\beta_m,\quad
 c_{2m+1}=t_m,\quad d_{2m+1}=r_m.
 \end{gathered}
\end{equation}
For all $i\in\ZZ$, we have $c_i,d_i:i\to i+1$, and
\eqref{eq:dual-pairing} becomes
\begin{equation}\label{eq:cd-products}
 c_{i+1}c_i=d_{i+1}d_i=0,\qquad
 c_{i+1}d_i=d_{i+1}c_i\ne0.
\end{equation}
A family of ordered bases $(u_i,v_i)$ of
$\calC_\infty(i,i+1)$ is called a family of \emph{repetitive bases}
if $u_{i+1}u_i=v_{i+1}v_i=0$ and
$u_{i+1}v_i=v_{i+1}u_i\ne0$ for every $i\in\ZZ$.
In particular, $(c_i,d_i)_{i\in\ZZ}$ is such a family.
The convention for numbering repetitive layers in \cite[\S1.4]{Sko}
is obtained by replacing $m$ with $-m$.

To express the relations in the form defining $A$, put, for all $i\in\ZZ$,
\[
 a_i=(-1)^i c_i,\qquad b_i=d_i+i(-1)^i c_i,
\]
where integers are evaluated in $\kk$. Equivalently,
\begin{equation}\label{eq:basis}
 c_i=(-1)^i a_i,\qquad d_i=b_i-i a_i.
\end{equation}
With $w_i:=c_{i+1}d_i=d_{i+1}c_i$, direct substitution gives
\[
 a_{i+1}a_i=0,\qquad
 a_{i+1}b_i=(-1)^{i+1}w_i,\qquad
 b_{i+1}a_i=(-1)^i w_i,\qquad
 b_{i+1}b_i=(-1)^{i+1}w_i.
\]
Thus the same category has the quiver
\begin{equation}\label{eq:chain-picture}
\begin{tikzcd}[column sep=large]
 \cdots\arrow[r]&i-1
 \arrow[r,shift left=3pt,"a_{i-1}"]
 \arrow[r,shift right=3pt,"b_{i-1}"']&i
 \arrow[r,shift left=3pt,"a_i"]
 \arrow[r,shift right=3pt,"b_i"']&i+1\arrow[r]&\cdots
\end{tikzcd}
\end{equation}
with the following defining relations for all $i\in\ZZ$:
\begin{equation}\label{eq:chain-relations}
 a_{i+1}a_i=0,\qquad
 a_{i+1}b_i+b_{i+1}a_i=0,\qquad
 b_{i+1}b_i-a_{i+1}b_i=0.
\end{equation}
They are the indexed versions, in the same order, of
$a^2$, $ab+ba$ and $b^2-ab$ in \eqref{eq:A}.
For all $i\in\ZZ$, set $s_i:=a_{i+1}b_i=(-1)^{i+1}w_i$.
For all $i,j\in\ZZ$, the morphism spaces are
\begin{equation}\label{eq:chain-spaces}
 \calC_\infty(i,j)=
 \begin{cases}
 \kk\id_i,&j=i,\\
 \kk a_i\oplus\kk b_i,&j=i+1,\\
 \kk s_i,&j=i+2,\\
 0,&\text{otherwise}.
 \end{cases}
\end{equation}
In particular, $\calC_\infty$ is locally bounded.

Define $\delta$ by the following assignments for all $i\in\ZZ$:
\begin{equation}\label{eq:delta}
 \delta(i)=i+1,\qquad \delta(a_i)=a_{i+1},\qquad
 \delta(b_i)=b_{i+1},\qquad \delta(s_i)=s_{i+1}.
\end{equation}
Since \eqref{eq:chain-relations} is independent of $i$, these
assignments define an automorphism, with inverse given by shifting
indices by $-1$.
The infinite cyclic group $\langle\delta\rangle$ acts freely on
objects with one object orbit, so it is admissible.

Apply the orbit-category calculation of \cite[Theorem~8.1]{Asa},
and pass to the classical orbit category as in
\cite[Remark~8.2(2)]{Asa}.
There is one vertex orbit and two arrow orbits, represented by
$a_i$ and $b_i$. Eliminating the isomorphisms that identify vertices
in an orbit leaves one vertex $*$, two loops $a,b$, and the relations
\[
 a^2=0,\qquad ab+ba=0,\qquad b^2-ab=0.
\]
Thus the calculated classical orbit category is precisely $A$.
Under this identification its canonical projection is
$Q:\calC_\infty\to A$, given, for all $i\in\ZZ$, by
\begin{equation}\label{eq:Q}
 Q(i)=*,\qquad Q(a_i)=a,\qquad Q(b_i)=b,\qquad Q(s_i)=s.
\end{equation}
In particular, $Q$ is a Galois covering and $Q\delta=Q$.
This proves Theorem~A(\ref{item:A-euclidean}), which we record as follows.

\begin{corollary}\label{cor:euclidean}
The algebra $A$ is of Euclidean type. More precisely,
$A\cong\widehat K/\langle\delta\rangle$, where
$\langle\delta\rangle$ is admissible and infinite cyclic.
\end{corollary}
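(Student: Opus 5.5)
To prove Corollary~\ref{cor:euclidean}, I will make the orbit-category calculation explicit with the source-first identification \eqref{eq:orbit-convention}, and then combine it with the definition of Euclidean type. First, $K$ is hereditary of Euclidean type, so it is tilted of Euclidean type. By construction $\calC_\infty=\widehat K$. It therefore suffices to show three things: $\delta$ is a well-defined automorphism; $\langle\delta\rangle$ is admissible and infinite cyclic; and the one-object orbit category $\calC_\infty/\langle\delta\rangle$ has endomorphism algebra isomorphic to $A$.

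For the automorphism, I use the presentation \eqref{eq:chain-picture}--\eqref{eq:chain-relations}. The basis change \eqref{eq:basis} is invertible, because $c_i=(-1)^ia_i$ and $d_i=b_i-ia_i$. So the quiver with relations \eqref{eq:chain-relations} presents the same category as the original relations \eqref{eq:cd-products}. The three relation families are spanned in each degree by the indexed relations: $w_i$ spans $\calC_\infty(i,i+2)$, and the four products computed above are $0$, $\pm w_i$, $\pm w_i$, $\pm w_i$. This shows the ideal is exactly the one generated by \eqref{eq:chain-relations}.

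Since these relations do not depend on $i$, the index shift preserves the ideal, and its inverse is the shift by $-1$. Hence $\delta$ is an automorphism. Because $\delta^n(i)=i+n$, the group is infinite cyclic and acts freely on objects, with the single orbit $\ZZ$. This gives admissibility.

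For the orbit algebra, I take the representative $0$. By \eqref{eq:orbit-convention},
\[
 E:=(\calC_\infty/\langle\delta\rangle)(*,*)\cong\coprod_{n\in\ZZ}\calC_\infty(n,0).
\]
By \eqref{eq:chain-spaces}, this is spanned by $\id_0$ (for $n=0$), by $a_{-1},b_{-1}$ (for $n=-1$) and by $s_{-2}$ (for $n=-2$), so $\dim E=4$. Composition follows \eqref{eq:orbit-composition}: the product of $h\in\calC_\infty(m,0)$ and $f\in\calC_\infty(n,0)$ is $h\,\delta^{m}(f)$. Define $\bar a=a_{-1}$, $\bar b=b_{-1}$ and $\bar s=s_{-2}$. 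Then, for example,
\[
 \bar a\bar b=a_{-1}b_{-2}=s_{-2}, \qquad \bar a^2=a_{-1}a_{-2}=0,
\]
and $\bar b\bar a+\bar a\bar b=0$ and $\bar b^2=\bar a\bar b$ follow the same way from \eqref{eq:chain-relations} at index $i=-2$. Thus $a\mapsto\bar a$, $b\mapsto\bar b$ defines an algebra map $A\to E$. It is surjective, since $\bar a,\bar b$ generate $E$, and both algebras have dimension $4$ by Lemma~\ref{lem:algebra}, so it is an isomorphism. The same identification shows that the projection is the functor $Q$ of \eqref{eq:Q}.

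The main care point is the bookkeeping of the composition formula \eqref{eq:orbit-composition}: which factor is shifted, and the sign pattern $(-1)^i$. I will verify it once on $\bar a\bar b$ and $\bar b\bar a$ to confirm that the anticommutation sign comes out as $ab+ba=0$ and not with some $i$-dependent sign. This sign issue is exactly why the rescaling $a_i=(-1)^ic_i$ was introduced, and the $i$-independence of \eqref{eq:chain-relations} guarantees that the result does not depend on the chosen representative.
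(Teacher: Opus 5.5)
Your proposal is correct and follows the paper's route: the same basis change to $a_i,b_i$, the same index-shift automorphism $\delta$, and the same admissibility argument via the free action with the single object orbit $\ZZ$. The only difference is in the last step: you compute the endomorphism algebra of the orbit directly in source-first coordinates and conclude with a surjection plus a dimension count, whereas the paper cites the quiver-with-relations presentation of orbit categories in \cite[Theorem~8.1]{Asa}, which makes your version slightly more self-contained.
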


For later use, give $A$ the $\ZZ$-grading
$\deg a=\deg b=1$, $\deg s=2$.
The smash-product calculation
\cite[Theorem~6.2.18]{AsaBook} gives the same presentation
\eqref{eq:chain-picture}--\eqref{eq:chain-relations}:
with our left-action convention,
$\calC_\infty\cong A\#\ZZ$ by $i\mapsto *^{(-i)}$.
For all $i,j\in\ZZ$, the morphism space from $i$ to $j$ is $A_{j-i}$.
The functor $Q$ is the forgetful functor, and $\delta$ corresponds
to the action of $-1$.

\section{The gap in the earlier nonstandardness argument}\label{sec:earlier}

The Euclidean realization is now established. We next examine the
printed proof of the nonstandardness assertion for $A$ before
giving an independent proof in Sections~\ref{sec:sources} and
\ref{sec:nonstandardness}.
We use the bound-quiver homotopy convention fixed in
Section~\ref{sec:conventions}.

Let $\mathsf L$ be the quiver with one vertex and two loops $\gamma,\eta$.
The local case in \cite[Proposition~5.1, p.~4253]{BS}, with the parameter
set equal to $-1$ as on p.~4255, is
\begin{equation}\label{eq:BS-presentation}
 \Omega(1)=\kk\mathsf L/I,\qquad
 I=(\gamma^2-\gamma\eta,\ \eta^2,\ \gamma\eta+\eta\gamma).
\end{equation}
We have changed the name of the second loop from $\delta$ to $\eta$ to
reserve $\delta$ for our covering automorphism.
The substitutions $\gamma=b$ and $\eta=-a$ identify this algebra with $A$.
Indeed,
\[
 \gamma^2-\gamma\eta\longmapsto b^2+ba=0,\qquad
 \eta^2\longmapsto a^2=0,\qquad
 \gamma\eta+\eta\gamma\longmapsto-ba-ab=0.
\]
Conversely, $a=-\eta$ and $b=\gamma$ give the inverse substitution.
The four-dimensional local case is also included explicitly in
\cite[Corollary~6.2, p.~4257]{BS}.

\begin{proposition}\label{prop:BS-group}
For the presentation in \eqref{eq:BS-presentation},
\[
 \pi_1(\mathsf L,I)\cong\ZZ.
\]
\end{proposition}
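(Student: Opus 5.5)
Since $\mathsf L$ has one vertex and two loops, walks are words in $\gamma^{\pm1},\eta^{\pm1}$, and before any relations the homotopy classes of closed walks form the free group $F(\gamma,\eta)$. So $\pi_1(\mathsf L,I)$ is the quotient of $F(\gamma,\eta)$ by the normal subgroup generated by $p^{-1}q$ for all pairs of paths $p,q$ lying in the support of one minimal relation of $I$. The plan is to determine all minimal relations explicitly, read off the identifications, and show that the quotient is $\ZZ$.

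\emph{Step 1: the ideal $I$.} Via the substitution $\gamma=b$, $\eta=-a$ and Lemma~\ref{lem:algebra}, $I$ contains all paths of length $3$. In length $2$, the paths $\gamma^2,\gamma\eta,\eta\gamma,\eta^2$ map to $s,-s,s,0$. Hence the elements of $I$ are the elements of $(\mathsf L_{\ge3})$ plus combinations $x\gamma^2+y\gamma\eta+z\eta\gamma+w\eta^2$ with $x-y+z=0$; no component of degree $0$ or $1$ survives, because $1,a,b$ are linearly independent modulo $J^2$.

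\emph{Step 2: minimal relations and their supports.} A relation $\rho\in I$ of mixed degree has its length-$2$ part $\rho_2$ in $I$, so $\rho_2$ is a proper subsum in $I$ unless $\rho$ is homogeneous. Thus minimal relations are either single paths (monomial, with no identification) or homogeneous of length $2$. In length $2$, $\eta^2$ lies in $I$, so a minimal relation with more than one path cannot involve $\eta^2$. A minimal relation supported in $\{\gamma^2,\gamma\eta,\eta\gamma\}$ with at least two terms, whose coefficients $x,y,z$ satisfy $x-y+z=0$, has support of size $2$ or $3$. Any support of size $\ge2$ within this set yields identifications. For example, $\gamma^2-\gamma\eta$ identifies $\gamma^2\sim\gamma\eta$, and $\gamma\eta+\eta\gamma$ identifies $\gamma\eta\sim\eta\gamma$. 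One also checks that a three-term relation with $x,y,z$ all nonzero has no proper subsum in $I$, because single paths among these three are not in $I$ and two-term subsums lie in $I$ only with the coefficients forced. Either way, the identifications generated are exactly that $\gamma^2$, $\gamma\eta$ and $\eta\gamma$ are pairwise homotopic.

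\emph{Step 3: computing the group.} The relations are $\gamma^2=\gamma\eta=\eta\gamma$ in $F(\gamma,\eta)$. From $\gamma^2=\gamma\eta$, cancelling $\gamma$ on the left gives $\gamma=\eta$, and all the relations then follow. So the group is $\langle\gamma,\eta\mid \gamma=\eta\rangle\cong\ZZ$, generated by the loop $\gamma$. Since the homotopy relation is generated by exactly these identifications together with free cancellation, the presented group is $\pi_1(\mathsf L,I)$.

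\emph{Main obstacle.} The delicate step is Step 2, namely verifying that $\eta^2$ never appears in a minimal relation with another path. This matters because identifying $\eta^2$ with $\gamma^2$ would add the relation $\eta^2=\gamma^2$, which becomes trivial once $\gamma=\eta$; so the answer does not actually change. The real issue is therefore the opposite: confirming that no additional identification arises that could kill $\gamma$. Such an identification could only come from a length-$2$ path equal to a length-$3$ or longer path, or to the empty path, and homogeneity of minimal relations rules this out. I would check this homogeneity argument carefully, since it is what guarantees that the nontrivial degree map $F(\gamma,\eta)\to\ZZ$, $\gamma,\eta\mapsto1$, respects all identifications. That map makes the group nontrivial and shows that it is exactly $\ZZ$.
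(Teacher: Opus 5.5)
Your proposal follows the paper's proof essentially step by step: $I$ is homogeneous in path length and contains every path of length at least $3$, $\eta^2$ is excluded from minimal relations with two or more terms, $\gamma^2,\gamma\eta,\eta\gamma$ are identified pairwise, and left cancellation in $\gamma^2=\gamma\eta$ gives $\gamma=\eta$ and hence the group $\ZZ$. There is one sign slip in Step~1, which does not affect the conclusion: because $\eta=-a$, one gets $\gamma\eta=-ba=s$ and $\eta\gamma=-ab=-s$, so the images are $s,s,-s,0$ and the condition is $x+y-z=0$; as written, your condition $x-y+z=0$ would put $\gamma^2+\gamma\eta$ in $I$ and exclude the generator $\gamma^2-\gamma\eta$, but each pair among $\gamma^2,\gamma\eta,\eta\gamma$ still supports a two-term relation, so the same pairwise identifications result.
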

\begin{proof}
The paths $\gamma^2$, $\gamma\eta$ and $\eta\gamma$ are nonzero,
as also follows from the multiplication table in Lemma~\ref{lem:algebra}.
Thus $\gamma^2-\gamma\eta$ and $\gamma\eta+\eta\gamma$
are minimal relations and give
\begin{equation}\label{eq:BS-homotopy}
 \gamma\gamma\sim\gamma\eta,
 \qquad \gamma\eta\sim\eta\gamma.
\end{equation}
The relation $\eta^2=0$ adds no homotopy relation.
We show that every minimal relation identifies only paths already
identified by \eqref{eq:BS-homotopy}.

The ideal $I$ is homogeneous in path length: if
$f=\sum_{n\geq 0}f_n\in I$, where $f_n$ is a linear combination
of paths of length $n$, then $f_n\in I$ for all $n\geq 0$.
All paths of length three
belong individually to $I$. For example,
\[
 \gamma^2\eta=\gamma\eta^2=0,\qquad
 \gamma\eta\gamma=-\gamma^2\eta=0,\qquad
 \gamma^3=\gamma\eta\gamma=0,
\]
and the remaining paths have a factor $\eta^2$ or reduce to these paths.
The same holds for every longer path.
The ideal $I$ has no nonzero components in path lengths $0$ or $1$,
and every path of length at least $3$ belongs to $I$ individually.
Hence every minimal relation with at least two terms is a linear
combination of paths of length two. The coefficient of $\eta^2$
must be zero: otherwise, that term alone would be a nonempty proper
subsum belonging to $I$, contrary to minimality.
 Its paths therefore lie among
$\gamma^2,\gamma\eta,\eta\gamma$, which are identified by
\eqref{eq:BS-homotopy}.
Consequently
\begin{equation}\label{eq:BS-group}
 \pi_1(\mathsf L,I)
 =\langle\gamma,\eta\mid
       \gamma^2=\gamma\eta,\ \gamma\eta=\eta\gamma\rangle.
\end{equation}
Cancellation in the first relation gives $\gamma=\eta$.
After this substitution the second relation imposes no condition.
Thus \eqref{eq:BS-group} is infinite cyclic.
One can also check the infinitude directly: path length, with inverse
arrows counted negatively, induces a homomorphism to $\ZZ$ which sends
each loop to $1$.
\end{proof}

The proof of \cite[Proposition~5.3]{BS} first notes that the quiver has
oriented cycles. 
It then asserts that every bound-quiver presentation has trivial
fundamental group. If this assertion were true, every connected
Galois covering $\calC\to A$ would have trivial Galois group,
since that group is a quotient of the fundamental group of a
presentation of $A$ adapted to the covering. Hence $\calC\cong A$,
and the oriented cycles in the quiver of $A$ would exclude a
simply connected source. However,
Proposition~\ref{prop:BS-group} disproves the assertion about
fundamental groups for the local algebra.
 {The nonstandardness conclusion will be established independently
below by determining all possible Galois sources.}

\section{All connected Galois covering sources}\label{sec:sources}

We now begin the independent proof of Theorem~A(\ref{item:A-sources}).
We determine all connected Galois sources using the multiplication
of $A$, without using the defective fundamental-group assertion
examined in Section~\ref{sec:earlier}.

Let $F:\calC\to A$ be a connected Galois covering with group $G$.
We do not assume that $G$ is abelian, finite or finitely generated.
Since $A$ has one object, the action of $G$ on $\Ob\calC$ is free and
transitive. Choose $x\in\Ob\calC$.
The induced-grading construction
\cite[Lemma~5.13 and Corollary~5.14]{Asa} gives
\begin{equation}\label{eq:grading}
 A=\bigoplus_{g\in G} A_g,
 \qquad A_g=F\bigl(\calC(gx,x)\bigr).
\end{equation}
With right-to-left composition the degree order is $gh$:
if $u:gx\to x$ and $v:hx\to x$, then
$u\,g(v):ghx\to x$ has image $F(u)F(v)$.
The smash-product reconstruction \cite[Theorem~5.10]{Asa},
in these coordinates, is the isomorphism
$\calC\cong A\#G$ given on objects by $gx\mapsto *^{(g)}$ and on
morphisms by the restrictions of $F$:
\begin{equation}\label{eq:recovered-spaces}
 \calC(gx,hx)\ \xrightarrow{\ F\ }\
 (A\#G)(*^{(g)},*^{(h)})=A_{h^{-1}g}.
\end{equation}
Thus its composite with the forgetful functor $A\#G\to A$ is $F$.
Here the general reconstruction is an isomorphism: the free
transitive action gives a bijection on objects, and
\eqref{eq:grading} gives a bijection on each morphism space.

\begin{lemma}\label{lem:homogeneous}
The ideals $J$ and $J^2$ are homogeneous for \eqref{eq:grading}.
\end{lemma}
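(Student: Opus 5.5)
The plan is to show first that every homogeneous component $A_g$ with $g\ne1$ lies in $J$, and then to read off homogeneity of $J$ and of $J^2$ formally. The input is that $A$ is local (Lemma~\ref{lem:algebra}, $A/J\cong\kk$), so $J$ is exactly the set of non-invertible elements of $A$. In addition, distinct objects of the locally bounded category $\calC$ are nonisomorphic. I will use the identification $A_g=F(\calC(gx,x))$ from \eqref{eq:grading}, the composition rule $u\,g(v)\mapsto F(u)F(v)$ stated after it, and the fact that the source-first covering map $F_{x,x}$ is an isomorphism. In particular, its restriction $\calC(x,x)\to A_1$ to the summand $g=1$ is injective.

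\emph{Step 1: $A_g\subseteq J$ for $g\ne1$.} Let $g\ne1$, and suppose some $u\in A_g$ is not in $J$. Then $u$ is invertible in $A$; write $u=F(f)$ with $f\colon gx\to x$. Decompose $u^{-1}=\sum_h v_h$ with $v_h\in A_h$. Comparing degree-$1$ components in $uu^{-1}=1$, using $1\in A_1$ and $A_gA_h\subseteq A_{gh}$, gives $u\,v_{g^{-1}}=1$. Write $v_{g^{-1}}=F(f')$ with $f'\colon g^{-1}x\to x$. Then $f\,g(f')\colon x\to x$ has image $1=F(\id_x)$, and injectivity on the degree-$1$ summand gives $f\,g(f')=\id_x$.

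Symmetrically, comparing degree-$1$ components in $u^{-1}u=1$ gives $v_{g^{-1}}u=1$. Hence $f'\,g^{-1}(f)=\id_x$, and applying $g$ yields $g(f')\,f=\id_{gx}$. So $f$ is an isomorphism $gx\to x$ between distinct objects (the action is free and $g\ne1$). This contradicts local boundedness. Hence $A_g\subseteq J$.

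\emph{Step 2: homogeneity of $J$.} Let $u\in J$ and write $u=\sum_g u_g$. By Step~1, $u-u_1=\sum_{g\ne1}u_g\in J$, so $u_1\in J$. Therefore every homogeneous component of $u$ lies in $J$, that is, $J=\bigoplus_g(J\cap A_g)$.

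\emph{Step 3: homogeneity of $J^2$.} $J^2$ is spanned by products $u v$ with $u,v\in J$. Since $J$ is spanned by homogeneous elements, $J^2$ is spanned by products $u_gv_h$ of homogeneous elements of $J$. Each such product lies in $J^2\cap A_{gh}$. A subspace spanned by homogeneous elements is homogeneous, so $J^2=\bigoplus_g(J^2\cap A_g)$.

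The only substantive point is Step~1, specifically extracting the single homogeneous inverse component and lifting it back to $\calC$ via injectivity of the covering map on the degree-$1$ summand. Steps~2 and~3 are formal.
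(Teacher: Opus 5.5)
Your proof is correct and follows essentially the same route as the paper. You show $A_g\subseteq J$ for $g\ne1$ by lifting $u$ and the degree-$g^{-1}$ component of $u^{-1}$ to mutually inverse morphisms between $gx\ne x$, and then deduce the homogeneity of $J$ and $J^2$ formally; the only cosmetic difference is that you lift $v_{g^{-1}}$ to $\calC(g^{-1}x,x)$ and translate by $g$, whereas the paper lifts it directly to $\calC(x,gx)$ via \eqref{eq:recovered-spaces}.
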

\begin{proof}
We first show that $A_g\subseteq J$ for $g\ne1$.
Assume that $u\in A_g\setminus J$. Since $A$ is local, $u$ is a unit.
Write $u^{-1}=\sum_{h\in G}w_h$, where $w_h\in A_h$
for all $h\in G$, and put $v:=w_{g^{-1}}$.
Since $uw_h\in A_{gh}$ and $w_hu\in A_{hg}$,
the degree-$1$ components of $uu^{-1}=1=u^{-1}u$ give
$uv=1=vu$.
By \eqref{eq:recovered-spaces}, the restrictions
\[
 F:\calC(gx,x)\xrightarrow{\sim}A_g,
 \qquad
 F:\calC(x,gx)\xrightarrow{\sim}A_{g^{-1}}
\]
are isomorphisms. Thus there are unique morphisms
\[
 \widetilde u:gx\longrightarrow x,
 \qquad
 \widetilde v:x\longrightarrow gx
\]
such that $F(\widetilde u)=u$ and $F(\widetilde v)=v$.
Since
\[
 F(\widetilde u\widetilde v)=uv=1=F(\id_x),
 \qquad
 F(\widetilde v\widetilde u)=vu=1=F(\id_{gx}),
\]
and $F$ is injective on each morphism space, we obtain
\[
 \widetilde u\widetilde v=\id_x,
 \qquad
 \widetilde v\widetilde u=\id_{gx}.
\]
Hence $gx\cong x$. Distinct objects of $\calC$ are
nonisomorphic, so $gx=x$. The freeness of the $G$-action
then gives $g=1$, contrary to our assumption.

It follows that an element of $A$ belongs to $J$ if and only if its
identity-degree component belongs to $J$. Therefore
\[
 J=(J\cap A_1)\oplus\bigoplus_{g\ne1}A_g.
\]
This proves that $J$ is homogeneous. Products of homogeneous components
then show that $J^2$ is homogeneous.
\end{proof}
The induced grading on $V=J/J^2$ is
\[
 V=\bigoplus_{g\in G}V_g,\qquad
 V_g=((J\cap A_g)+J^2)/J^2.
\]
In particular, every homogeneous class of degree $g$ has a lift
in $J\cap A_g$.

\begin{lemma}\label{lem:one-degree}
The graded vector space $V=J/J^2$ is concentrated in one degree $t\in G$.
\end{lemma}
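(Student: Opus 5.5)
The plan is to combine the homogeneity of $J$ and $J^2$ (Lemma~\ref{lem:homogeneous}) with the three properties of the form $B$ in \eqref{eq:bilinear}: nondegenerate, nonsymmetric, and with $\kk\bar a$ as its unique isotropic line. The idea is that $B$ must respect the grading, and that no splitting of $V$ into two distinct degrees is compatible with a nonsymmetric form.

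First I would fix the degree of $s$. By Lemma~\ref{lem:homogeneous}, $J^2=\kk s$ is homogeneous. Since it is one-dimensional, $s\in A_z$ for a unique $z\in G$. Next I would show that $B$ is graded. Take homogeneous classes $\bar u\in V_g$ and $\bar v\in V_h$, and choose lifts $u\in J\cap A_g$ and $v\in J\cap A_h$, which exist by the remark after Lemma~\ref{lem:homogeneous}. Then $uv\in A_{gh}\cap J^2$, and $uv=B(\bar u,\bar v)s$. Hence $B(\bar u,\bar v)\neq 0$ forces $gh=z$.

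Now suppose, for contradiction, that $V$ is not concentrated in one degree. Since $\dim V=2$, we get $V=V_g\oplus V_h$ with $g\ne h$ and both summands lines, spanned by $\bar u$ and $\bar v$ respectively. At most one of these lines is isotropic, because $\kk\bar a$ is the unique isotropic line. Say $V_h$ is not isotropic. Then $B(\bar v,\bar v)\ne0$, so $h^2=z$.

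I would then show that both off-diagonal entries vanish. If $gh=z=h^2$, cancellation gives $g=h$, which is impossible. Likewise $hg=z=h^2$ gives $g=h$. Hence $B(\bar u,\bar v)=0=B(\bar v,\bar u)$. In the basis $(\bar u,\bar v)$ the matrix of $B$ is therefore diagonal, hence symmetric, and this contradicts the nonsymmetry of $B$. As a consistency check, nondegeneracy then forces $B(\bar u,\bar u)\ne0$, so $g^2=z$ as well; but symmetry alone already suffices. Thus $V$ is concentrated in a single degree $t$.

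The only delicate step is the middle one: showing that $B$ is graded with a single target degree $z$. This needs homogeneous lifts of homogeneous classes, and it needs $J^2$ to be a homogeneous line. Both are supplied by Lemma~\ref{lem:homogeneous}. The rest is cancellation in $G$, and no commutativity of $G$ is required.
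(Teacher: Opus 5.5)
Your proof is correct and follows the same route as the paper: you fix the degree of $s$ from the homogeneity of $J^2$, compute degrees of products through homogeneous lifts, and force $g=h$ by cancellation in $G$ using the properties of $B$. The difference is only organizational. The paper treats separately the case where both squares are nonzero (using nonsymmetry) and the case where one square is zero (using the unique isotropic line and nondegeneracy). You instead choose the non-isotropic line first, obtain $h^2=z$, and let nonsymmetry alone supply the nonzero off-diagonal entry. This merges the two cases and makes nondegeneracy unnecessary.
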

\begin{proof}
Since $J^2$ is one-dimensional and homogeneous, there is some $q\in G$
such that $J^2\subseteq A_q$.
Assume, for a contradiction, that $V$ is not concentrated in one degree.
Since $\dim V=2$, there are homogeneous basis vectors $u,v$ of distinct
degrees $g,h$.
Products of these classes are computed in $J^2$ using lifts in
$J$; they are independent of the lifts because $J^3=0$.
To determine their degrees, choose homogeneous lifts of degrees
$g$ and $h$, respectively. We derive a contradiction in two claims.

\smallskip\noindent
\textit{Claim 1. $u^2 = 0$ or $v^2 = 0$.}
Assume that both are nonzero. Then $g^2=h^2=q$.
At least one of $uv,vu$ is nonzero. Indeed, otherwise the matrix of $B$
in the basis $(u,v)$ would be diagonal and $B$ would be symmetric,
contrary to \eqref{eq:matrix}.
If $uv\ne0$, then $gh=q=g^2$, so $h=g$ by cancellation.
If $vu\ne0$, then $hg=q=g^2$, and cancellation again gives $h=g$.
Both cases contradict the choice of $u,v$.

\smallskip\noindent
\textit{Claim 2. $u^2 \ne 0$ and $v^2 \ne 0$.}
Assume that $u^2=0$; the other case is obtained by interchanging $u,v$.
The isotropic line of $B$ is unique, so $v^2\ne0$.
Moreover, $uv$ and $vu$ are both nonzero.
Indeed, the matrix of $B$ in this basis has upper-left entry zero,
and either off-diagonal entry being zero would make it singular.
Thus $gh=q=h^2$, and right cancellation gives $g=h$.
This is a contradiction.

Claims 1 and 2 exclude all possibilities for the two squares.
Therefore $V$ is concentrated in one degree.
\end{proof}

For each integer $n\ge1$, give $A$ the $\ZZ/n\ZZ$-grading
$\deg a=\deg b=\overline1$, obtained by reducing its $\ZZ$-grading
modulo $n$, and set
\[
 \calC_n:=A\#(\ZZ/n\ZZ).
\]
By the weighted-quiver calculation \cite[Theorem~6.2.18]{AsaBook},
the identification $i\leftrightarrow *^{(-i)}$ gives the presentation
\eqref{eq:chain-picture}--\eqref{eq:chain-relations} with indices
modulo $n$. Components of degrees that coincide modulo $n$ are
summed: for $n=2$ the socle morphism is an endomorphism, and for
$n=1$ the identity, the two arrows and the socle morphism all
belong to the same endomorphism algebra.
In particular, $\calC_1=A$.
The orbit-category calculation also gives
\[
 \calC_n\cong\calC_\infty/\langle\delta^n\rangle.
\]
We identify these categories by their indexed vertices and arrows.
Write $Q_n:\calC_n\to A$ for the canonical forgetful Galois covering,
whose values are \eqref{eq:Q}, and put $Q_\infty=Q$.

\begin{proposition}\label{prop:classification}
Let $F:\calC\to A$ be a connected Galois covering with group $G$.
Then $G$ is cyclic. For some
$\ell\in\{1,2,3,\ldots,\infty\}$ there are isomorphisms
$\Ph:\calC_\ell\to\calC$ and $\ph:A\to A$ for which
\begin{equation}\label{eq:classification-square}
\begin{tikzcd}[column sep=huge]
 \calC_\ell\arrow[r,"\Ph","\sim"']\arrow[d,"Q_\ell"']&
 \calC\arrow[d,"F"]\\
 A\arrow[r,"\ph","\sim"']&A
\end{tikzcd}
\end{equation}
commutes. The group $G$ is infinite cyclic when $\ell=\infty$, and has
order $\ell$ otherwise.
\end{proposition}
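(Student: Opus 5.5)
We start from the grading \eqref{eq:grading} and the reconstruction $\calC\cong A\#G$ of \eqref{eq:recovered-spaces}. By Lemma~\ref{lem:one-degree}, $V=J/J^2$ is concentrated in a single degree $t\in G$. The plan is to choose homogeneous lifts of a basis of $V$ and show that, after replacing $a,b$ by a new pair of generators $a',b'$ satisfying the same relations, the grading becomes the one in which both generators have degree $t$. Concretely, take a basis $\bar a,\bar b$ of $V=V_t$ and lift them to $a',b'\in J\cap A_t$. These lifts differ from $a,b$ by elements of $J^2=\kk s$, so $a'b'=ab=s$ and similarly for the other products. Hence $a\mapsto a'$, $b\mapsto b'$ defines an automorphism $\ph$ of $A$; it is well defined because the relations only involve products of two generators, which are computed modulo $J^3=0$. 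Then $s=a'b'\in A_{t^2}$, and $1\in A_1$. Since $1,a',b',s$ form a basis, $A_1$, $A_t$ and $A_{t^2}$ exhaust $A$, and all other components vanish. (If $t^2=t$ or $t=1$, components coincide and the corresponding pieces are summed.)

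Next I use connectedness of $\calC$ to show that $G=\langle t\rangle$. In $A\#G$ the object $*^{(g)}$ has nonzero morphisms only to and from $*^{(gt^{\pm1})}$ and $*^{(gt^{\pm2})}$, because the support of the grading is $\{1,t,t^2\}$. Therefore the connected component of $*^{(1)}$ consists exactly of the objects $*^{(h)}$ with $h\in\langle t\rangle$. Connectedness then forces $G=\langle t\rangle$, so $G$ is cyclic. Let $\ell$ be the order of $t$, with $\ell=\infty$ if $t$ has infinite order.

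The surjective homomorphism $\ZZ\to G$, $1\mapsto t$, induces $\ZZ/\ell\ZZ\cong G$ (read as $\ZZ\cong G$ when $\ell=\infty$). Under this isomorphism, the $\ph$-transported grading of $A$ becomes the standard grading $\deg a=\deg b=\bar1$. More precisely, $\ph$ carries the standard $\ZZ/\ell\ZZ$-graded algebra $A$ to $A$ with its $G$-grading. This gives an isomorphism of smash products
\[
 \calC_\ell=A\#(\ZZ/\ell\ZZ)\longrightarrow A\#G,
\]
which is $\ph$ on morphisms and $i\mapsto i$ (via the group isomorphism) on objects. Composing with the reconstruction isomorphism $A\#G\cong\calC$ gives $\Ph$. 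Commutativity of \eqref{eq:classification-square} holds because both forgetful functors act as the identity on underlying elements, so $F\Ph=\ph Q_\ell$. I will need to check the smash-product convention \eqref{eq:smash-convention} and the identification $i\leftrightarrow *^{(-i)}$, since this may require using $t^{-1}$ in place of $t$, or composing with an inversion. These adjustments are harmless.

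I expect the main obstacle to be the first step: verifying that the transported presentation really has generators of degree exactly $t$ satisfying the original relations. In particular, I must ensure that the defining relations of $A$ in $a',b'$ hold \emph{exactly} and not merely modulo $J^2$. This works because $J^3=0$, so the squares of degree-two products are determined by $V$ and $B$. The degenerate small cases $\ell=1,2$, where degrees collapse, also need some care.
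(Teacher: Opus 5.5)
Your proposal is correct and follows essentially the same route as the paper. You lift a basis of $V=V_t$ to $a',b'\in J\cap A_t$, use $sJ=Js=0$ to get the defining relations exactly and hence the automorphism $\ph$, and deduce that the support of the grading lies in $\{1,t,t^2\}$, so that connectedness forces $G=\langle t\rangle$. You then transport the standard cyclic grading along $\ph$ to obtain a smash-product isomorphism, which you compose with the reconstruction $\calC\cong A\#G$. The differences are cosmetic: you take the steps in a different order and phrase the connectedness argument inside $A\#G$ rather than via $\calC(gx,hx)$. The paper's explicit formula $\Ph(i)=t^{-i}x$ confirms the sign adjustment you anticipated.
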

\begin{proof}
By Lemma~\ref{lem:one-degree}, $V=V_t$ for some $t\in G$.
Let $\pi:J\to V=J/J^2$ be the quotient map, and recall that
$\bar a=\pi(a)$ and $\bar b=\pi(b)$.
By the definition of the quotient grading,
\[
 V_t=((J\cap A_t)+J^2)/J^2
     =\pi(J\cap A_t).
\]
Hence the restriction $\pi:J\cap A_t\to V_t$ is surjective.
Choose $a',b'\in J\cap A_t$ such that
$\pi(a')=\bar a$ and $\pi(b')=\bar b$.
Since $J^2=\kk s$, these elements have the form
\begin{equation}\label{eq:lift-change}
 a'=a+\alpha s,\qquad b'=b+\beta s
 \qquad{\text{for some }\alpha,\beta\in\kk}.
\end{equation}
We show the assertion in three steps.

\smallskip\noindent
\textit{Claim 1. We have $G=\langle t\rangle$.}
Since $sJ=Js=0$, multiplication gives
\begin{equation}\label{eq:lift-products}
 (a')^2=0,\qquad a'b'=s,\qquad b'a'=-s,\qquad (b')^2=s.
\end{equation}
Thus $s$ is homogeneous of degree $t^2$.
Hence
$$
A = \k 1 + \k a' + \k b' + \k s \subseteq A_1 + A_t + A_{t^2} \subseteq \sum_{g \in \ang{t}}A_g \subseteq A, 
$$
so $A = \sum_{g \in \ang{t}}A_g$.
Thus $A_g = 0$ if $g \not\in \ang{t}$.
Therefore, by \eqref{eq:recovered-spaces},
\[
 \calC(gx,hx)\cong A_{h^{-1}g}=0
 \qquad\text{if }h^{-1}g\notin\langle t\rangle.
\]
Thus, if $gx$ and $hx$ are joined by a nonzero morphism
in either direction, then
$g \in h\langle t\rangle$.
Since $\calC$ is connected, every object $gx$ can be joined
to $x$ by a finite sequence of nonzero morphisms,
with either direction allowed at each step.
Consequently, $g \in \langle t\rangle$
for all $g\in G$, and hence $G=\langle t\rangle$.

\smallskip\noindent
\textit{Claim 2. The assignments $a\mapsto a'$ and $b\mapsto b'$
define an automorphism $\ph$ of $A$.}
The products in \eqref{eq:lift-products} are the defining relations of
$A$, and $a'b'=s$. Thus the assignments define a homomorphism with
$\ph(s)=s$. Its inverse is given by
$a\mapsto a-\alpha s$ and $b\mapsto b-\beta s$.
This proves the claim.

\smallskip\noindent
\textit{Claim 3. There is an isomorphism $\Ph$ for which
$F\Ph=\ph Q_\ell$.}
Identify $G=\langle t\rangle$ with $\ZZ$ if $t$ has infinite
order and with $\ZZ/n\ZZ$ if its order is $n$.
The automorphism $\ph$ is a graded isomorphism from the
grading $\deg a=\deg b=t$ to the grading \eqref{eq:grading}.
It therefore induces an isomorphism of their smash products.
Composing it with the inverse reconstruction
\eqref{eq:recovered-spaces} gives $\Ph:\calC_\ell\to\calC$,
explicitly
\begin{equation}\label{eq:Psi-objects}
 \Ph(i)=t^{-i}x,\qquad
 F\Ph(a_i)=a',\quad F\Ph(b_i)=b',\quad F\Ph(s_i)=s.
\end{equation}
These assignments are made for all $i\in\ZZ$ or all $i\in\ZZ/n\ZZ$,
respectively.
Naturality of the forgetful functors gives
$F\Ph=\ph Q_\ell$.
\end{proof}

\begin{remark}\label{rem:target}
In Proposition~\ref{prop:classification}, one cannot always
choose the isomorphism $\Phi$ so that $F\Phi=Q_\ell$.
To see this, let $\phi:A\to A$ be the automorphism defined by
$\phi(a)=a+s$ and $\phi(b)=b$, and fix the Galois covering
\[
 F:=\phi Q:\calC_\infty\longrightarrow A.
\]
Suppose that there is an automorphism $\Phi$ of $\calC_\infty$
such that $F\Phi=Q$.
The \emph{irreducible morphism space} from $x$ to $y$ is
$\rad\calC_\infty(x,y)/\rad^2\calC_\infty(x,y)$,
where $\rad^2\calC_\infty(x,y)$ consists of sums of composites
of two radical morphisms.
These spaces are nonzero precisely when $y=x+1$,
and then $\rad^2\calC_\infty(x,y)=0$.
Since $\Phi$ is an automorphism, it preserves the radical
and its square. Hence it induces an isomorphism
\[
 \frac{\rad\calC_\infty(i,i+1)}
      {\rad^2\calC_\infty(i,i+1)}
 \xrightarrow{\sim}
 \frac{\rad\calC_\infty(\Phi(i),\Phi(i+1))}
      {\rad^2\calC_\infty(\Phi(i),\Phi(i+1))}
\]
for all $i\in\ZZ$.
The left-hand side is nonzero, so the right-hand side is also
nonzero. Therefore $\Phi(i+1)=\Phi(i)+1$.
Putting $j:=\Phi(i)$, we obtain
\[
 \Phi\bigl(\calC_\infty(i,i+1)\bigr)
 =\calC_\infty(j,j+1).
\]
Applying $F\Phi=Q$ to this morphism space gives
\[
 \kk(a+s)+\kk b=\kk a+\kk b,
\]
contrary to the linear independence of $a,b,s$.
Thus every connected Galois covering $F:\calC\to A$
satisfies $F\Phi=\phi Q_\ell$ for some isomorphism
$\Phi:\calC_\ell\to\calC$ and some automorphism $\phi$ of $A$,
but the stronger requirement $F\Phi=Q_\ell$ need not hold
for any such $\Phi$.
\end{remark}

\section{Nonstandardness}\label{sec:nonstandardness}

We now show that the category $\calC_n$ is not simply connected
for all  $n \in \{1,2,3,\dots, \infty\}$.
For every integer $n\ge1$, the quiver of $\calC_n$ has vertices
$i\in\ZZ/n\ZZ$ and two parallel arrows from $i$ to $i+1$
for all $i\in\ZZ/n\ZZ$.
{It contains an oriented cycle and is therefore not acyclic.}
This includes $n=1$, where the arrows are loops,
and $n=2$, where arrows run in both directions between the two vertices.
Thus $\calC_n$ is not simply connected.
There is also a proper connected Galois covering
\begin{equation}\label{eq:finite-cover}
 \calC_\infty\longrightarrow\calC_n
\end{equation}
with group $\langle\delta^n\rangle$.

For $\calC_\infty$, use its repetitive presentation
\eqref{eq:cd-products}, whose defining relations, for all $i\in\ZZ$, are
\begin{equation}\label{eq:weight-relations}
 c_{i+1}c_i=0,\qquad d_{i+1}d_i=0,\qquad
 c_{i+1}d_i=d_{i+1}c_i.
\end{equation}

\begin{proposition}\label{prop:chain-not-sc}
The category $\calC_\infty$ is not simply connected.
\end{proposition}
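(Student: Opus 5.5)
The plan is to use the definition of simple connectedness fixed in Section~\ref{sec:conventions}. The quiver of $\calC_\infty$ is acyclic: all arrows go from $i$ to $i+1$. So it suffices to exhibit \emph{one} bound-quiver presentation of $\calC_\infty$ with nontrivial fundamental group. I would take the repetitive presentation with arrows $c_i,d_i\colon i\to i+1$ and relations \eqref{eq:weight-relations}. Write $I$ for the ideal these relations generate in the path category, and use the morphism spaces \eqref{eq:chain-spaces} to control $I$.

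\textbf{Step 1: the minimal relations.} First, $I$ is homogeneous in path length, since it is generated by length-two elements. Every path of length at least three lies in $I$ individually, because $\calC_\infty(i,i+3)=0$ and more generally $\calC_\infty(i,j)=0$ for $j\ge i+3$. For each $i$, the paths of length two from $i$ to $i+2$ are $c_{i+1}c_i$, $c_{i+1}d_i$, $d_{i+1}c_i$ and $d_{i+1}d_i$. Their span meets $I$ in the three-dimensional space spanned by
\[
c_{i+1}c_i,\qquad d_{i+1}d_i,\qquad c_{i+1}d_i-d_{i+1}c_i.
\]
This is because $\dim\calC_\infty(i,i+2)=1$ and the common value $c_{i+1}d_i=d_{i+1}c_i$ is nonzero by \eqref{eq:cd-products}.

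Now I would argue, as in the proof of Proposition~\ref{prop:BS-group}, that a minimal relation has a single length component, since otherwise that component would be a proper subsum lying in $I$. A minimal relation with at least two terms therefore has length two. Its support cannot contain $c_{i+1}c_i$ or $d_{i+1}d_i$, since each of these alone lies in $I$. Hence the support is exactly $\{c_{i+1}d_i,\ d_{i+1}c_i\}$. The only homotopy identifications are therefore
\[
c_{i+1}d_i\sim d_{i+1}c_i\qquad(i\in\ZZ),
\]
together with arrow--inverse cancellation.

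\textbf{Step 2: an invariant.} Define a weight on walks by $w(c_i)=1$ and $w(d_i)=0$, with inverse arrows counted negatively, and extend additively under concatenation. Cancellation of an arrow against its inverse preserves $w$. The identification $c_{i+1}d_i\sim d_{i+1}c_i$ also preserves $w$, since both sides have weight $1$. Both kinds of move are compatible with concatenation, so $w$ descends to a homomorphism $\pi_1(\calC_\infty,I)\to\ZZ$ at vertex $0$.

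\textbf{Step 3: a nontrivial loop.} The closed walk $d_0^{-1}c_0$ at vertex $0$ (first $c_0$, then back along $d_0$) has weight $1$. The constant walk has weight $0$, so this walk is not null-homotopic. Hence the fundamental group of this presentation is nontrivial, and $\calC_\infty$ is not simply connected.

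I expect the main obstacle to be Step 1, namely making sure that \emph{every} minimal relation in $I$, not just the generators, identifies only paths already identified by the generators. The essential inputs there are the length homogeneity of $I$ and the dimension count $\dim\calC_\infty(i,i+2)=1$.
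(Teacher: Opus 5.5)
Your proposal is correct and follows essentially the paper's argument. Both use a weight homomorphism to $\ZZ$ that separates the two parallel arrows (the paper takes $\wt(c_i)=0$, $\wt(d_i)=1$, the reverse of your choice) and evaluate it on the closed walk formed by $c_i$ and $d_i^{-1}$. The only difference is in checking that every minimal relation preserves weight: you list all minimal relations using length-homogeneity and $\dim\calC_\infty(i,i+2)=1$, whereas the paper notes that $I_\infty$ is generated by weight-homogeneous elements, hence is weight-homogeneous, so a minimal relation cannot mix weights; this is shorter and needs no dimension count.
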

\begin{proof}
{For all $i\in\ZZ$ and all arrows $f$, assign weights by}
\[
 \wt(c_i)=0,\qquad\wt(d_i)=1,\qquad
 \wt(f^{-1})=-\wt(f).
\]
For a walk $w=f_m\cdots f_1$, where each $f_i$ is an arrow
or a formal inverse arrow, define its \emph{weight} by
\[
 \wt(w):=\sum_{i=1}^{m}\wt(f_i).
\]
The constant walk has weight $0$.
We show that it is invariant under homotopy.

Replacing $f^{-1}f$ or $ff^{-1}$ by the constant walk
at the corresponding vertex does not change the weight,
since $\wt(f^{-1})=-\wt(f)$ for every arrow $f$.
Both paths in the binomial relation in
\eqref{eq:weight-relations} have weight $1$ for all $i\in\ZZ$.
The remaining defining relations are monomial and hence
homogeneous. Thus the relation ideal is generated by
homogeneous elements and is therefore homogeneous
for the weighting.
Let $I_\infty$ denote the ideal generated by
\eqref{eq:weight-relations}.
For a minimal relation $r=\sum_{j=1}^{m}\lambda_jp_j$,
where the $p_j$ are distinct paths and
$\lambda_j\in\kk^\times$ for all $j = 1,\dots, m$, put
\[
 r_d:=\sum_{\wt(p_j)=d}\lambda_jp_j
 \qquad\text{for all }d\in\ZZ.
\]
Since $I_\infty$ is homogeneous for the weighting,
$r_d\in I_\infty$ for all $d\in\ZZ$.
If paths of two or more different weights occurred in $r$,
each nonzero $r_d$ would be a nonempty proper subsum of $r$
belonging to $I_\infty$, contrary to minimality.
Thus all paths occurring in $r$ have the same weight.
Hence every generating homotopy relation preserves weight, as desired.

Now consider the closed walk
\begin{equation}\label{eq:weight-loop}
 \begin{tikzcd}[column sep=huge]
 i\arrow[r,bend left=20,"d_i"]&
 i+1\arrow[l,bend left=20,"c_i^{-1}"]
 \end{tikzcd}
 \qquad c_i^{-1}d_i.
\end{equation}
Its weight is $1$, whereas the constant walk has weight $0$.
Thus it is not null-homotopic.
This presentation has nontrivial fundamental group, so
$\calC_\infty$ is not simply connected.
\end{proof}

\begin{corollary}\label{cor:nonstandard}
The algebra $A$ has no simply connected Galois covering. In particular,
it is not standard.
\end{corollary}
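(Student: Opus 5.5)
The plan is to combine Proposition~\ref{prop:classification} with the non-simple-connectedness results just established. We start from an arbitrary Galois covering $F\colon\calC\to A$ whose source $\calC$ is simply connected, and aim for a contradiction. By the definition recalled in Section~\ref{sec:conventions}, a simply connected category is in particular connected, so $F$ is a connected Galois covering with some group $G$. Proposition~\ref{prop:classification} then gives an index $\ell\in\{1,2,3,\ldots,\infty\}$ and an isomorphism $\Ph\colon\calC_\ell\to\calC$. We will not need the automorphism $\ph$ of $A$ here.

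The second step is to check that simple connectedness is invariant under isomorphism of locally bounded categories. An isomorphism $\Ph$ identifies the quivers. Transporting along $\Ph$ gives a bijection between the bound-quiver presentations of $\calC_\ell$ and those of $\calC$, and this bijection preserves minimal relations and hence the fundamental groups. Consequently $\calC$ is simply connected if and only if $\calC_\ell$ is.

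The third step is a case split on $\ell$. If $\ell=n$ is finite, the quiver of $\calC_n$ contains the oriented cycle given by one arrow $i\to i+1$ for each $i\in\ZZ/n\ZZ$, as observed at the start of Section~\ref{sec:nonstandardness}. So $\calC_n$ is not acyclic, and hence not simply connected. If $\ell=\infty$, Proposition~\ref{prop:chain-not-sc} shows that $\calC_\infty$ is not simply connected, via the weight argument applied to the closed walk \eqref{eq:weight-loop}. Either case contradicts the simple connectedness of $\calC$. Therefore $A$ has no simply connected Galois covering.

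Nonstandardness then follows at once from the definition of standardness in \cite[\S1.2]{Sko}, which requires a Galois covering by a simply connected locally bounded category. The only point needing care is the second step, and it is essentially definitional. Because the paper's notion of simple connectedness quantifies over all presentations, we must make sure the transport argument is applied to every presentation, not just to one fixed presentation.
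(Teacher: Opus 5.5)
Your proposal is correct and follows the paper's proof. You reduce to a connected source, identify it with some $\calC_\ell$ via Proposition~\ref{prop:classification}, and rule out finite $\ell$ by the oriented cycle and $\ell=\infty$ by Proposition~\ref{prop:chain-not-sc}; the differences are minor: you handle disconnected sources by noting that simple connectedness presupposes connectedness (the paper instead restricts a possibly disconnected Galois covering to a connected component), and you spell out the isomorphism invariance of simple connectedness that the paper uses tacitly.
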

\begin{proof}
Let $F:\calC\to A$ be a connected Galois covering.
By Proposition~\ref{prop:classification}, its source is isomorphic to
one of $\calC_n$ or $\calC_\infty$.
{The finite cases are not acyclic, and the infinite case is excluded}
by Proposition~\ref{prop:chain-not-sc}.
Thus no connected Galois source is simply connected.

For a Galois covering $\calR\to A$ with possibly disconnected
source and Galois group $G$, choose a connected component
$\calR_0$ and put $G_0=\{g\in G\mid g\calR_0=\calR_0\}$.
The group $G$ is transitive on $\Ob\calR$, since the quotient has
one object.
Let $g\in G$. If $gx\in\Ob\calR_0$ for some
$x\in\Ob\calR_0$, then the connected components
$g\calR_0$ and $\calR_0$ have the object $gx$ in common.
Hence $g\calR_0=\calR_0$, so $g\in G_0$.
Hence $G_0$ is transitive on $\Ob\calR_0$ because so is $G$ on $\Ob\calR$.
There are no morphisms between distinct components, so restriction
gives a connected Galois covering $\calR_0\to A$.
Thus allowing disconnected Galois sources whose connected components
are simply connected gives no additional possibility.
\end{proof}

\section{The published equivalence and the standardness clause}
\label{sec:conclusion}

We have proved both parts of Theorem~A in the order stated.
The repetitive presentation and orbit-category calculation in
Section~\ref{sec:repetitive}, together with Lemma~\ref{lem:algebra},
prove (\ref{item:A-euclidean}), as recorded in Corollary~\ref{cor:euclidean}.
Proposition~\ref{prop:classification} and
Corollary~\ref{cor:nonstandard} prove (\ref{item:A-sources}).

The algebra $A$ is of Euclidean type and is not standard.
It therefore disproves the implication
\[
 R\text{ is of Euclidean type}
 \quad\Longrightarrow\quad
 R\text{ is standard}
\]
for selfinjective algebras $R$.
Consequently, it disproves the ``only if'' direction of the
equivalence between Euclidean type and standard
representation-infinite domestic type explicitly stated in
\cite[p.~45]{BHS04} and \cite[p.~158]{BS05}, both with reference
to \cite{Sko}. The reverse implication is not contradicted by
this example. Its failure of standardness already disproves the
stated implication; no conclusion about domesticity is needed
for this purpose.

The cited papers refer to \cite{Sko}. We can identify the logical
grounds of the equivalence by combining two assertions there.
The main theorem in \S1.5 gives the implication
from standard representation-infinite domestic type to Euclidean
type. The Proposition in \S2.1 asserts that the relevant
Euclidean orbit algebras are standard and representation-infinite
domestic. Combining these assertions gives the stated equivalence;
the error exposed here is in the standardness conclusion of the
Proposition.

The same equivalence, repeated in \cite[Remark~1.3]{AIS},
therefore needs correction. Its attribution there to the main
theorem alone is imprecise, but the substantive problem is the
false standardness conclusion just identified.
In particular, the equivalence cannot be used there to compare
Theorem~1.2 of \cite{AIS}, Theorem~1 of \cite{BHS04}, and
Theorem~2.1 of \cite{HS06}. Their hypotheses must instead be
compared separately. The present example by itself does not
disprove those classification statements as a whole.

The assertion directly contradicted in \cite{Sko} is more
specific. The Kronecker category satisfies the Euclidean
hypothesis in \cite[\S1.3]{Sko}: it is representation-infinite
and hereditary of Euclidean type, and its indecomposable
injectives form a complete slice in its preinjective component.
The hereditary property gives tiltedness, while the slice
verifies the additional condition in that definition.
The group $\langle\delta\rangle$ acts on $\widehat K$, is
infinite cyclic and free on objects, and has one object orbit.
It therefore satisfies exactly the admissibility condition in
\cite[\S1.4]{Sko}. Its orbit algebra is $A$.
The standardness conclusion of the Proposition in
\cite[\S2.1, p.~180]{Sko} would make $A$ standard, contrary to
Corollary~\ref{cor:nonstandard}.
The main theorem in \cite[\S1.5]{Sko} assumes standardness;
our nonstandard example is not a counterexample to that theorem
as stated. We make no claim here that it contradicts the other
conclusions of the Proposition.

Our orbit-algebra realization is for $\Omega(1)$ itself.
In \cite[proof of Proposition~2.2]{BHS}, the algebra realized as
an orbit algebra of a repetitive category is instead a different
algebra $A(n)$, which is socle equivalent to $\Omega(n)$.
{{\emph{Socle equivalence}} means that the quotients of the two algebras
by their socles are isomorphic.}
For $n=1$, the two generators $u,v$ of $A(1)$ satisfy
$u^2=v^2=0$ and $uv+vu=0$.
For all $R\in\{A(1),A\}$, put $J_R=\rad R$.
The displayed relations for $A(1)$ make all products of three
generators zero, so $J_{A(1)}^3=0$; also $J_A^3=0$ by
Lemma~\ref{lem:algebra}. Thus the {\emph{square map}}
\[
 q_R\colon J_R/J_R^2\longrightarrow J_R^2,\qquad
 z+J_R^2\longmapsto z^2
\]
is well-defined for both algebras; it need not be linear.
We have $q_{A(1)}=0$, whereas
$q_A(x\bar a+y\bar b)=y^2s$ for all $x,y\in\kk$.
Hence $A(1)\not\cong\Omega(1)=A$.
Our realization is not obtained by identifying these
socle-equivalent algebras.

Proposition~\ref{prop:classification} determines all connected Galois
sources of $A$, and Section~\ref{sec:nonstandardness} shows that none
is simply connected. This gives an independent proof of
nonstandardness and repairs the gap examined in
Section~\ref{sec:earlier}.

\section*{Acknowledgments}
This work was partially supported by JSPS Grant-in-Aid for
Scientific Research (C) 18K03207 and 25K06922, and by
Osaka Central Advanced Mathematical Institute (MEXT Promotion of
Distinctive Joint Research Center Program JPMXP0723833165).

\section*{Declaration of AI assistance}
The author used OpenAI's ChatGPT to assist with mathematical
discussions, the formulation and examination of proof arguments,
literature searches, and the preparation and revision of the text
and LaTeX source.
The author takes full responsibility for the mathematical claims,
proofs, references, and all other content of this paper.

\end{document}